\newcommand{\I}{\mathrm{I}}
\def\hom{\operatorname{Hom}}
\def\Aut{\operatorname{Aut}}
\def\max{\operatorname{Max}}
\def\dim{\operatorname{dim}}
\def\rank{\operatorname{rank}}
\def\E{{\operatorname{E}}}            
\def\GL{{\operatorname{GL}}}          
\def\K{{\operatorname{K}}}            
\def\og{{\operatorname{O}}}          
\def\SO{{\operatorname{SO}}}          
\def\EO{{\operatorname{EO}}}          
\def\Sp{{\operatorname{Sp}}}
\def\ESp{{\operatorname{ESp}}}
\def\KH{{\operatorname{KH}}}
\newtheorem{theorem}{Theorem}[section]
\newtheorem{lemma}[theorem]{Lemma}
\newtheorem{corollary}[theorem]{Corollary}
\newtheorem{proposition}[theorem]{Proposition}
\theoremstyle{definition}
\newtheorem*{acknowledgements}{Acknowledgements}
\title[Normality of DSER elementary orthogonal group ]%
 {Normality of DSER elementary orthogonal group} %
\author{A.A. Ambily \and Ravi A. Rao}
\address{\newline Department of Mathematics \hfill School of Mathematics
\newline Cochin University of Science and Technology\hfill Tata Institute of Fundamental Research
\newline Cochin 682022, Kerala\hfill 1, Dr. Homi Bhabha Road, Mumbai 400 005
\newline India \hfill India}
\email{{ambily@cusat.ac.in, aaambily@gmail.com}}
\email{{ravi@math.tifr.res.in}}
\thanks{ 2010 Mathematics Subject Classification \  19G99 (primary), 19B99, 20G35, 20H25 (secondary)}
\begin{document}
\maketitle
\thispagestyle{empty}
\begin{abstract}
Let $(Q, q)$ be a quadratic space over a commutative ring $R$ in which $2$ is invertible, and consider the Dickson--Siegel--Eichler--Roy's subgroup $\EO_{R}(Q, H(R)^{m})$ of the orthogonal group $\og_R(Q \perp H(R)^m)$, with $\rank Q= n \geq 1$ and $m\geq 2$. We show that $\EO_{R}(Q, H(R)^{m})$ is a  normal subgroup of $\og_R(Q \perp H(R)^m)$, for all  $m\geq 2$. We also prove that the DSER group $\EO_{R}(Q, H(P))$ is a normal subgroup of  $\og_{R}(Q \perp H(P))$, where $Q$ and $H(P)$ are quadratic spaces over a commutative ring $R$, with $\rank(Q) \ge 1$ and $\rank(P) \ge 2$.\end{abstract}

\section{Introduction}

J-P. Serre's problem on projective modules (See \cite{MR0068874}) played a vital role in the development of $K$-theory. It stimulated the investigation of $K_{1}$, higher $K$-groups, the stable and unstable behaviour of $K$-groups. The study of $K_{1}$ began with Bass's seminal work in 1964 (See \cite{MR0174604}).  In \cite{MR0207856}, P.M. Cohn proved that $\E_{2}(R)$ is not normal in $\GL_{2}(R)$ for a commutative ring $R$. In 1977, A.A. Suslin proved the normality of the elementary linear group $\E_{n}(R)$ in the general linear group $\GL_{n}{(R)}$ for $n\ge 3$, over a commutative ring $R$ (See \cite{MR0447424}). Analogues of Suslin's normality theorem were proved for the elementary orthogonal group $\EO_{2n}(R) \subset \og_{2n}(R)$ for $n\ge 3$ by A.A. Suslin--V.I. Kope{\u\i}ko in \cite{MR0469914} and the failure of the result in the case $n=2$ was proved by V.I. Kope{\u\i}ko in \cite{MR717580}. Similar results for the elementary symplectic group $\ESp_{2n}(R) \subseteq \Sp_{2n}(R)$ for $n\ge 3$ was proved by Kope{\u\i}ko in \cite{MR497932}. The normality results for the elementary Chevalley groups over local rings was proved by E. Abe in \cite{MR0258837} and for elementary Chevalley--Demazure group schemes over commutative rings was studied by G. Taddei in \cite{MR862660}.  In \cite{MR1958377}, R. Hazrat--N. Vavilov gave a simpler proof of normality of elementary Chevalley groups. The case of twisted elementary Chevalley groups was proved by K. Suzuki in \cite{MR1339654} and A. Bak--N. Vavilov in \cite{MR1329456}.

\vspace{1.5mm}

For classical-like groups such as general quadratic groups, general Hermitian groups, odd unitary groups, isotropic reductive groups, the normal subgroup structure was studied by A. Bak--N. Vavilov in \cite{MR1329456}, G. Tang in \cite{MR1609905}, V. Petrov in \cite{MR2033642}, V.Petrov--A. Stavrova in \cite{MR2473747} and W. Yu in \cite{MR3003312} respectively. Recently, in \cite{MR3492199}, W. Yu--G. Tang studied the nipotency of  odd unitary $K_{1}$-functor and in \cite{RB1}, R. Basu  showed that the unstable $K_{1}$-groups of general Hermitian groups over module finite rings are nilpotent-by-abelian.

 \vspace{1.5mm}

 We study the normality of the elementary Dickson--Siegel--Eichler--Roy's orthogonal group which was considered by A. Roy in \cite{MR0231844}. We describe this briefly below.
 
 \vspace{1.5mm}

 Let $R$ be a commutative ring and let $(Q,q)$ be a quadratic $R$-space with associated bilinear form $B_q$ and $P$ be a finitely generated projective $R$-module. The module $P\oplus P^*$ has a natural quadratic form given by $p((x,f)) = f(x)$ for $x\in P$, $f\in P^*$. The corresponding bilinear form $B_p$ is given by $B_p((x_1,f_1),(x_2,f_2)) = f_1(x_2)+f_2(x_1)$ for $x_1,x_2 \in P$ and $f_1,f_2 \in P^*$. The quadratic space $(P \oplus P^*, p)$, denoted by
$H(P)$, is called the {\em hyperbolic space} of $P$. Given any homomorphism $\alpha : Q \rightarrow P$, define $\alpha^*: P^* \rightarrow Q$ by the formula $\alpha^* = d_{B_q}^{-1} \circ \alpha^t$, 
where $\alpha^t$ denotes the dual map $P^* \rightarrow Q^*$. If  $\beta : Q\rightarrow P^*$, then define $\beta^*: P \rightarrow Q$ by precomposing $d_{B_q}^{-1}\circ \beta^t$ with $\varepsilon: P\rightarrow P^{**}$, where $\beta^t$ denotes the dual map $P^{**} \rightarrow Q^*$.
The $A$-linear map $\alpha^*$ is characterized by the relation \[(f\circ\alpha)(z) = B_q\left(\alpha^*(f),z \right)  \textnormal{ for } \,f\in P^*,\, z\in Q.\]

Let $\og_{R}(Q)$ denote the orthogonal group of the quadratic module $(Q,q)$. That is, 
\[\og_{R}(Q) = \{\alpha \in \Aut_{R}(Q)\  | \  q(\alpha(z)) =q(z) \mbox{ for all } z\in Q\}\]
In \cite{MR0231844}, A. Roy defined the ``elementary'' transformations
$E_{\alpha}$ and $E_{\beta}^*$ of $Q\!\perp\!H(P)$ given by
\[\begin{array}{lllll}
\medskip 
E_{\alpha}(z) &= z+\alpha(z) & & E_{\beta}^*(z) &=z+\beta(z)\\ 
\medskip
E_{\alpha}(x) &= x & & E_{\beta}^*(x) &= -\beta^*(x)+ x-\frac{1} {2}\beta\beta^*(x)\\ 
\medskip
E_{\alpha}(f) &= -\alpha^*(f)-\frac{1}{2}\alpha\alpha^*(f)+f & & E_{\beta}^*(f) &= f
\end{array}\] 
for $z \in Q, x \in P$ and $f\in P^*$.  

 \vspace{1.5mm}

The Dickson--Siegel--Eichler--Roy's subgroup $\EO_{R}(Q\perp H(P))$ of the orthogonal group $\og_{R}(Q\perp H(R)^{m})$ is the subgroup generated by the elementary generators $E_{\alpha}$, $E_{\beta}^{*}$, where $\alpha \in \hom(Q,P)$ and $\beta \in \hom(Q,P^{*})$. We shall refer to this group by DSER group. For more historical details on this group we refer the reader to the introduction of \cite{aarr}. 

 \vspace{1.5mm}

In \cite{MR3525594}, B. Calm\`es--J. Fasel have described an elementary subgroup $\EO_{2n+1}(R)$ of the special orthogonal group $\SO_{2n+1}(R)$ generated by a set of 5 generators. By comparing the generators with the generators of DSER, we can identify two of the elementary generators are of the form $E_{\alpha_{ij}}$ and $E_{\beta_{ij}}^{*}$, and other three generators are commutators of these type of generators.
 \vspace{1.5mm}

The following normality results were proved by the first author in \cite{aa1}.
  \vspace{1.5mm}

 \begin{enumerate}[label = (\roman*)]
 \item {\it $ \og_R(Q\!\perp\!H(R)^{m-1})$ normalizes $ \EO_R(Q\!\perp\!H(R)^{m}).$ In particular$,$ $\EO_R$ is a normal subgroup of $\og_R$.}
\item {\it If $m \geq \dim \max(R) + 2,$ then $ \og_R(Q\!\perp\!H(R)^{m})$ normalizes $
\EO_R(Q\!\perp\!H(R)^{m})$.}
\item {\it If $m > l,$ then $\og_R(Q\!\perp\!H(R)^{m})$ normalizes 
$\EO_R(Q\!\perp\!H(R)^{m})$  provided $R$ satisfies the stable range condition $0$-$SR_l$.}
\end{enumerate}
 \vspace{1.5mm}

All aforementioned results about normality of this elementary 
orthogonal group in the literature have assumed stable rank 
conditions. In this paper, we prove that the normality of the DSER subgroup in most generality {\it without these restrictions on the hyperbolic rank}, except for the case of $\rank(Q) =2$ and $m=1$. We prove the following theorems.
\begin{theorem}\label{main1}
  $\EO_{R}(Q, H(R)^m)$ is normal in  $\og_{R}(Q\perp H(R)^m)$, where $Q$ and $H(R)^m$ are quadratic spaces over a commutative ring $R$ with $\rank(Q) \ge 1$ and $m \ge 2$.
\end{theorem}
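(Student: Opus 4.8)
The plan is to prove normality by localisation and patching, using a Quillen--Suslin type local--global principle to reduce the global statement to local rings, where the conditional results (i)--(iii) already apply. Since $\EO_{R}(Q \perp H(R)^m)$ is generated by the elementary generators $E_{\alpha}$ and $E_{\beta}^{*}$, it suffices to show that for every $g \in \og_{R}(Q \perp H(R)^m)$ and every generator, say $E_{\alpha}$ with $\alpha \in \hom(Q, R^m)$, the conjugate $g E_{\alpha} g^{-1}$ lies in $\EO_{R}(Q \perp H(R)^m)$; the case of $E_{\beta}^{*}$ follows by the symmetric argument interchanging the roles of $P = R^m$ and $P^{*}$.

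First I would introduce a polynomial parameter. Scaling $\alpha$ by an indeterminate $X$ gives $X\alpha \in \hom(Q, R[X]^m)$, and, viewing $g$ over $R[X]$ by base change, I set
\[
\theta(X) \;=\; g\, E_{X\alpha}\, g^{-1} \;\in\; \og_{R[X]}\!\left(Q[X] \perp H(R[X])^m\right),
\]
so that $\theta(0) = \mathrm{id}$ and $\theta(1) = g E_{\alpha} g^{-1}$. The engine of the proof is a local--global principle: if $\theta(X) \in \og_{R[X]}$ satisfies $\theta(0) = \mathrm{id}$ and $\theta_{\mathfrak m}(X) \in \EO_{R_{\mathfrak m}[X]}$ for every maximal ideal $\mathfrak m$ of $R$, then $\theta(X) \in \EO_{R[X]}$; specialising at $X = 1$ then delivers $g E_{\alpha} g^{-1} \in \EO_{R}(Q \perp H(R)^m)$. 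The substance of this principle is a dilation lemma that spreads an elementary factorisation from a principal localisation $R_{s}$ back to $R$ after the substitution $X \mapsto s^{N} X$ for $N \gg 0$, combined with a patching argument over a finite comaximal cover; the hypothesis $\theta(0) = \mathrm{id}$ is exactly what the dilation step requires.

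It then remains to verify the local hypothesis $\theta_{\mathfrak m}(X) \in \EO_{R_{\mathfrak m}[X]}$. Over the local ring $R_{\mathfrak m}$ one has $\dim\max(R_{\mathfrak m}) = 0$, so the assumption $m \ge 2$ places us in the range $m \ge \dim\max(R_{\mathfrak m}) + 2$ of (ii); in particular $\og_{R_{\mathfrak m}}$ normalises $\EO_{R_{\mathfrak m}}$. To promote this to the $X$-dependent statement (which I cannot obtain by applying (ii) over $R_{\mathfrak m}[X]$, whose $\dim\max$ may exceed $m-2$), I would use the local decomposition $g_{\mathfrak m} = \epsilon\, h$ with $\epsilon \in \EO_{R_{\mathfrak m}}$ and $h \in \og_{R_{\mathfrak m}}(Q)$ acting as the identity on $H(R_{\mathfrak m})^m$. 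The key point is the conjugation identity
\[
h\, E_{X\alpha}\, h^{-1} \;=\; E_{X\,\alpha h^{-1}},
\]
valid because $h$ preserves $B_{q}$, whence $(\alpha h^{-1})^{*} = h\,\alpha^{*}$; here $\alpha h^{-1} \in \hom(Q, R^m)$ over $R_{\mathfrak m}[X]$, so $E_{X\alpha h^{-1}}$ is again an elementary generator. Consequently $\theta_{\mathfrak m}(X) = \epsilon\, E_{X\alpha h^{-1}}\, \epsilon^{-1} \in \EO_{R_{\mathfrak m}[X]}$, as required.

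I expect the main obstacle to be the local--global principle of the second paragraph, and in particular its dilation lemma, since this is precisely what removes the stable-rank and hyperbolic-rank restrictions present in (i)--(iii); establishing it for the DSER elementary group requires showing that conjugates of elementary generators by elementary generators remain controllably elementary and that the relevant commutator expressions acquire the needed denominators under localisation. A secondary technical point is the local decomposition $g_{\mathfrak m} = \epsilon h$, which encodes that over a local ring the only obstruction to an isometry being elementary comes from the anisotropic factor $\og(Q)$, whose conjugation action preserves the set of elementary generators by the identity above.
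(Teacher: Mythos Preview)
Your overall architecture matches the paper: introduce $\theta(X)=gE_{X\alpha}g^{-1}$ with $\theta(0)=\I$, invoke the local--global principle (the paper's Theorem~\ref{lg}), and reduce to the case of a local base ring. The conjugation identity $hE_{X\alpha}h^{-1}=E_{X\alpha h^{-1}}$ for $h\in\og_R(Q)$ is also correct and is exactly Lemma~\ref{normal5}.

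The gap is in your local decomposition. You assert $g_{\mathfrak m}=\epsilon\,h$ with $\epsilon\in\EO_{R_{\mathfrak m}}(q,h^m)$ and $h\in\og_{R_{\mathfrak m}}(Q)$, but this is not what holds and is not what Rao's Lemma~\ref{Rao84} gives: the correct residual factor is $\og_{R_{\mathfrak m}}(h^m)$, not $\og_{R_{\mathfrak m}}(Q)$. In fact your version fails already over a field. Every DSER generator $E_\alpha$, $E_\beta^*$ is unipotent, hence lies in the spinorial kernel, so $\EO_R(q,h^m)$ has trivial spinor norm. If $Q=\langle 1\rangle$ then $\og(Q)=\{\pm 1\}$ also has trivial spinor norm, whereas $\tau_u=[u]\perp[u^{-1}]\in\og(h)\subset\og(h^m)$ has spinor norm $u\bmod k^{*2}$; for $u$ a non-square this element cannot be written as (DSER-elementary)$\cdot$(element of $\og(Q)$). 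So the intuition that ``the only obstruction to being elementary comes from the anisotropic factor $\og(Q)$'' is not right; the obstruction lives in $\og(h^m)$.

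Because the residual is $\og(h^m)$, the easy conjugation formula you use is unavailable, and this is exactly where the real work in the paper sits. After Rao's decomposition, the paper further splits $\og_{R}(h^m)=\EO_R(h^{m-1},h)\cdot\og_R(h)$; it shows $\og_R(h)$ normalises $\EO_R(q,h^m)$ by the explicit formulas of Lemma~\ref{normal3}, identifies $\EO_R(h^{m-1},h)$ with the usual $\EO_{2m}(R)$ via Lemma~\ref{lemdelta2}, and then observes that each generator of $\EO_{2m}(R)$ already lies in some $\og_R(q\perp h^{m-1})$, which normalises $\EO_R(q,h^m)$ by Proposition~\ref{normal}. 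These explicit conjugation formulas are valid with a polynomial parameter, so they also yield $\theta_{\mathfrak m}(X)\in\EO_{R_{\mathfrak m}[X]}$ as required by the local--global step. Replacing your $\og(Q)$-decomposition with this $\og(h^m)$-analysis is what is needed to close the gap.
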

\begin{theorem}
The DSER group $\EO_{R}(Q, H(P))$ is a normal subgroup of  $\og_{R}(Q \perp H(P))$, where $Q$ and $H(P)$ are quadratic spaces over a commutative ring $R$, with $\rank(Q) \ge 1$ and $\rank(P) \ge 2$.
\end{theorem}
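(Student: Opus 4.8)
The plan is to reduce the projective case to the already-established free case (Theorem \ref{main1}) by localisation, using a local--global principle for the DSER group together with a polynomial-parametrisation (Quillen--Suslin) dévissage. Since $P$ is finitely generated projective over the commutative ring $R$, for every maximal ideal $\mathfrak{m}$ the localisation $P_\mathfrak{m}$ is free over $R_\mathfrak{m}$ of rank equal to $\rank(P) \ge 2$, so that $H(P)_\mathfrak{m} \cong H(R_\mathfrak{m})^{\rank(P)}$. Thus over each $R_\mathfrak{m}$, and over its polynomial extension $R_\mathfrak{m}[X]$ (in which $2$ is still invertible), Theorem \ref{main1} applies and tells us that $\EO$ is normal in $\og$. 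The hyperbolic bound $\rank(P)\ge 2$ is exactly what licenses this invocation with $m = \rank(P) \ge 2$ in each localisation.

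To prove normality it suffices to show that for every $g \in \og_R(Q\perp H(P))$ and every elementary generator $E_\alpha$ (with $\alpha \in \hom(Q,P)$) or $E_\beta^*$ (with $\beta\in\hom(Q,P^*)$), the conjugate lies in $\EO_R(Q, H(P))$; since Roy's formulas are symmetric under $P\leftrightarrow P^*$ (with $H(P)\cong H(P^*)$ and $\rank(P^*)=\rank(P)\ge 2$), the same argument with $P$ replaced by $P^*$ handles $E_\beta^*$, so I would treat $E_\alpha$ only. First I would introduce an indeterminate $X$ and form the one-parameter family $h(X) = g\, E_{X\alpha}\, g^{-1} \in \og_{R[X]}(Q[X]\perp H(P)[X])$, where $X\alpha \in \hom(Q[X], P[X])$. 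A direct inspection of Roy's formulas shows $E_{0} = \mathrm{Id}$, hence $h(0) = \mathrm{Id}$, while $h(1) = g\,E_\alpha\,g^{-1}$ is the element we wish to capture.

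Next I would localise $h(X)$ at each maximal ideal $\mathfrak{m}$. Since $P_\mathfrak{m}$ is free of rank $\ge 2$, the free-case normality (Theorem \ref{main1} applied over $R_\mathfrak{m}[X]$) gives $h(X)_\mathfrak{m}\in\EO_{R_\mathfrak{m}[X]}(Q_\mathfrak{m}[X], H(P_\mathfrak{m})[X])$ for every $\mathfrak{m}$. The decisive step is then a local--global principle of Quillen--Suslin type for the DSER elementary group: if $\sigma(X)\in\og_{R[X]}(Q[X]\perp H(P)[X])$ satisfies $\sigma(0)=\mathrm{Id}$ and $\sigma(X)_\mathfrak{m}\in\EO_{R_\mathfrak{m}[X]}$ for all $\mathfrak{m}$, then $\sigma(X)\in\EO_{R[X]}(Q[X],H(P)[X])$. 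Applying this to $\sigma=h$ yields $h(X)\in\EO_{R[X]}(Q[X],H(P)[X])$, and specialising $X\mapsto 1$, which carries elementary generators to elementary generators, gives $g\,E_\alpha\,g^{-1}=h(1)\in\EO_R(Q,H(P))$, as required.

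The main obstacle is the local--global principle itself, as the free-case input and the specialisation are otherwise formal. Establishing it requires a dilation (splitting) lemma for $\EO$: if an element becomes elementary over a localisation $R_s$ and is congruent to the identity modulo a high power of $s$, then it can be realised, up to a suitable elementary factor, over $R$. One then runs the standard patching over a finite affine cover $\operatorname{Spec}(R)=\bigcup_i \operatorname{Spec}(R_{s_i})$ to glue the local elementarity of $h(X)$ (using $h(0)=\mathrm{Id}$ to control the congruence conditions) into the desired global statement. I would either invoke this principle from the companion treatment of DSER groups or prove the dilation lemma directly from Roy's explicit commutator identities for the generators $E_\alpha$ and $E_\beta^*$.
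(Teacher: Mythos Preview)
Your proposal is correct and follows essentially the same route as the paper: introduce the one-parameter family $h(X)=g\,E_{X\alpha}\,g^{-1}$ with $h(0)=\I$, localise at each maximal ideal, invoke the free-case normality (Theorem~\ref{main1}) over $R_{\mathfrak{m}}[X]$ to get $h(X)_{\mathfrak{m}}\in\EO_{R_{\mathfrak{m}}[X]}$, then apply the Local--Global Principle (Theorem~\ref{LGP}, proved via the Dilation Lemma and Quillen patching in Section~3) and specialise at $X=1$. Your identification of the Local--Global Principle as the crux, together with its dilation/patching proof outline, matches the paper's treatment exactly.
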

In order to prove these theorems, we prove an analogue of Quillen's Local-Global principle in the extended module case for the DSER group. In addition to these results, we see that in the free module case, the elementary DSER group with $q$ to be the standard hyperbolic form coincides with the usual elementary orthogonal group $\EO_{n+2m}(R)$. We also use a decomposition result of R.A. Rao in \cite{MR727375}.
\newpage

\noindent {\bf Notations:}
\begin{itemize}
\item $\GL_n(R)$ will denote the general linear group. Let $\alpha \in \GL_n(R)$ and $\beta \in \GL_m(R)$, then by $\alpha\perp \beta$ we denote the matrix 
$\begin{pmatrix}
\alpha &0\\
  0&\beta                                                                                                      \end{pmatrix}$
and by $\alpha\top \beta$ we denote the matrix $\begin{pmatrix}
                        0&\alpha\\
                        \beta&0                                                                                                     \end{pmatrix}$.
\item $\widetilde{\psi_1} = \begin{pmatrix} 0 & 1\\1 & 0\end{pmatrix}$, $ \widetilde{\psi_r} = \widetilde{\psi_1} \perp \widetilde{\psi_{r-1}}$.
\item The matrix of the bilinear form corresponding to the quadratic form $q$ is denoted by $\varphi$.
\item $E(\alpha)$ denote either $E_{\alpha}$ or $E_{\alpha}^*$, where $\alpha\in \hom(Q,P)$ or $\hom(Q,P^*)$.
\item $h$ denote the hyperbolic plane $H(R)$ and $h^{m} = h\perp h \perp \ldots \perp h = H(R)^{m}$.
\item  $\EO_R(q, h^m) = \EO_{R}(Q, H(R)^{m})$ and  $\og_R(q\perp h^m) = \og_{R}(Q\perp H(R)^{m})$.
\end{itemize}

\section{Normality of $\EO_R(q, h^m)$ in $\og_R(q\perp h^m)$}
In this section, we prove the normality of the elementary orthogonal group $\EO_R(q, h^m)$ of $\og_R(q\perp h^m)$, where $R$ is a commutative ring in which $2$ is invertible, $(Q,q)$ a quadratic space, and $h$ denote the hyperbolic plane $H(R)$.

Now we recall a decomposition result of R.A. Rao from \cite{MR727375}. 

\begin{lemma}[\cite{MR727375}*{Lemma~2.2}]\label{Rao84}
 Let $(Q,q)$ be a diagonizable quadratic $R$-space. Let $m\geq \dim \max R +1$. Then 
 \[\og_R(q\perp h^m) = \EO_R(q, h^m)\cdot\og_R(h^m) = \og_R(h^m)\cdot \EO_R(q,h^m)),\]where $\EO_R(q, h^m)\cdot\og_R(h^m)$ denote the subset 
 $\{\sigma_1\sigma_2 \,|\,\sigma_1\in \EO_R(q,h^m), \sigma_2\in \og_R(h^m)\}$ of $\og_R(q\perp h^m)$.
 \end{lemma}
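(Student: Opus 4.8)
The plan is to derive both displayed equalities from a single statement and to prove that statement by induction on $\rank Q$, with the stable range hypothesis entering only at the heart of the argument. First note that the two equalities are equivalent: since $\EO_R(q,h^m)$ and $\og_R(h^m)$ are subgroups, the set of inverses of $\EO_R(q,h^m)\cdot\og_R(h^m)$ is exactly $\og_R(h^m)\cdot\EO_R(q,h^m)$, and $\og_R(q\perp h^m)$ is closed under inversion; so it suffices to prove $\og_R(q\perp h^m)=\EO_R(q,h^m)\cdot\og_R(h^m)$. Concretely, for $\sigma\in\og_R(q\perp h^m)$ I must produce $\epsilon\in\EO_R(q,h^m)$ with $\epsilon^{-1}\sigma$ fixing $Q$ pointwise, since any orthogonal map fixing $Q$ pointwise preserves $Q^{\perp}=h^m$ and restricts there to an element of $\og_R(h^m)$.

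Using diagonalizability (and $2\in R^{\times}$, which is what makes the generators $E_\alpha,E_\beta^*$ available) I fix an orthogonal basis $z_1,\dots,z_n$ of $Q$ and induct on $n=\rank Q$; the case $n=0$ is the tautology $\og_R(h^m)=\{1\}\cdot\og_R(h^m)$. For the inductive step write $Q=\langle z_1\rangle\perp Q'$ with $q'=q|_{Q'}$. The whole argument then reduces to the following Core Claim: the orbit of $z_1$ under $\EO_R(q,h^m)$ contains every unimodular $w\in Q\perp h^m$ with $q(w)=q(z_1)$. Granting it, apply it to $w:=\sigma(z_1)$, which is unimodular of the correct norm because $\sigma\in\og_R$, to obtain $\epsilon_1\in\EO_R(q,h^m)$ with $\epsilon_1(z_1)=\sigma(z_1)$; then $\epsilon_1^{-1}\sigma$ fixes $z_1$, preserves $\langle z_1\rangle^{\perp}=Q'\perp h^m$, and restricts there to some $\rho'\in\og_R(q'\perp h^m)$. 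The induction hypothesis writes $\rho'=\epsilon'\tau'$ with $\epsilon'\in\EO_R(q',h^m)$ and $\tau'\in\og_R(h^m)$, and since extending structure maps by zero on $z_1$ gives the inclusion $\EO_R(q',h^m)\subseteq\EO_R(q,h^m)$ while the embedded $\og_R(h^m)$ is unchanged, we conclude $\sigma=\epsilon_1\epsilon'\tau'\in\EO_R(q,h^m)\cdot\og_R(h^m)$.

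Everything therefore rests on the Core Claim, which is where the bound $m\geq\dim\max R+1$ is used and which I expect to be the main obstacle. Since $\EO_R(q,h^m)\subseteq\og_R(q\perp h^m)$, the orbit of $z_1$ automatically lies among unimodular vectors of norm $q(z_1)$, so the content of the claim is transitivity on that set. I would prove it by exploiting the large hyperbolic summand: the generators $E_\alpha$ and $E_\beta^*$ move $z_1$ to $z_1+\alpha(z_1)$ and $z_1+\beta(z_1)$ with $\alpha(z_1)\in P$ and $\beta(z_1)\in P^*$ arbitrary (as $z_1$ generates a free summand), so from $z_1$ one already reaches a large family of vectors; conversely, starting from a general unimodular $w$, one uses that $w$ is unimodular together with Bass's bound $\mathrm{sr}(R)\leq\dim\max R+1\leq m$ to bring the hyperbolic coordinates of $w$ into standard position by elementary operations realized through these generators. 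The delicate point, and the crux of the whole proof, is to carry the quadratic constraint $q(w)=q(z_1)$ through the reduction so that the process lands exactly on $z_1$ and not merely on some unimodular vector of the same norm; it is precisely the stable range bound that supplies enough hyperbolic room for this elementary reduction to terminate correctly while never leaving the $\EO_R(q,h^m)$-orbit.
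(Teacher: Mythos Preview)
The paper does not supply a proof of this lemma; it is merely quoted from \cite{MR727375} as a known decomposition result, so there is no in-paper argument to compare against. Your outline is the standard one and matches what Rao does in the cited reference: reduce to transitivity of $\EO_R(q,h^m)$ on unimodular vectors of a prescribed norm (your Core Claim), then peel off one orthogonal basis vector of $Q$ at a time by induction. The observation that the two product decompositions are exchanged by inversion, and that an orthogonal map fixing $Q$ pointwise automatically lies in $\og_R(h^m)$, are both correct and are exactly how the reduction goes.

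The genuine gap is that you do not prove the Core Claim; you only name it and describe, in broad strokes, that stable range will let you simplify the hyperbolic coordinates. You yourself flag the ``delicate point'' of keeping the quadratic constraint $q(w)=q(z_1)$ intact through the reduction and landing on $z_1$ rather than some other vector of the same norm, but you do not resolve it. This is precisely the substance of the lemma: in Rao's argument (building on Roy's cancellation theorem \cite{MR0231844}) one writes $w=(z,x,f)\in Q\oplus R^m\oplus (R^m)^*$, uses $m\ge \dim\max R+1$ to make part of the hyperbolic row unimodular via elementary moves realizable by the $E_\alpha,E_\beta^*$ (and their commutators, which produce the Eichler-type transvections internal to $h^m$), and then uses the explicit formulas for $E_\alpha,E_\beta^*$ to successively kill the $P$- and $P^*$-components while the quadratic identity forces the $Q$-component to become $z_1$. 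Until those steps are written out, your proposal is a correct plan rather than a proof.
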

 
 In \cite{MR610478} on page 290, R. Parimala remarked that if $\alpha \in O_k(h)$, where $k$ is a field of characteristic $\neq 2$, then there is a nonzero element $u \in R$ such that $\alpha = [u]\perp [u^{-1}]$ or  $[u]\top [u^{-1}].$ We prove that this also holds over a local ring.
\begin{lemma}\label{normal2}
Let $R$ be a local ring with maximal ideal $\mathfrak{m}$. Assume that 
$2R = R$. Let $\alpha \in O_R(h)$. Then there exists a unit $u \in R$ 
such that $\alpha = [u]\perp [u^{-1}]$ or $ [u]\top [u^{-1}].$
\end{lemma}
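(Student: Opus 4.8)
The plan is to identify $O_R(h)$ with an explicit set of $2\times 2$ matrices and read the two normal forms straight off the defining equations, bringing in locality only at the very last step. Since $2$ is a unit in $R$, an $R$-linear automorphism of $h$ preserves the quadratic form $q$ if and only if it preserves the associated bilinear form $B_q$: this is because $q(z)=\tfrac12 B_q(z,z)$ and $B_q(z,w)=q(z+w)-q(z)-q(w)$, so each form determines the other. As $h=H(R)$ is free of rank $2$ with $B_q$ represented by $\widetilde{\psi_1}=\left(\begin{smallmatrix}0&1\\1&0\end{smallmatrix}\right)$, I would write $\alpha$ as a matrix $A=\left(\begin{smallmatrix}a&b\\c&d\end{smallmatrix}\right)$ and translate $\alpha\in O_R(h)$ into the single matrix identity $A^{t}\,\widetilde{\psi_1}\,A=\widetilde{\psi_1}$.

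Expanding this identity and using that $2$ is invertible yields exactly three scalar relations, namely $ac=0$, $bd=0$, and $ad+bc=1$; these are the only equations I would need. The one place where the hypothesis that $R$ is local enters is the relation $ad+bc=1$: since a sum of the two products $ad$ and $bc$ is a unit and $R$ is local with maximal ideal $\mathfrak m$, at least one of $ad$, $bc$ must lie outside $\mathfrak m$ and hence be a unit (were both in $\mathfrak m$, so would be their sum, contradicting $ad+bc=1$).

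This splits the argument cleanly into two cases. If $ad$ is a unit, then $a$ and $d$ are each units, whence $ac=0$ and $bd=0$ force $c=0$ and $b=0$; thus $A=[a]\perp[d]$ with $ad=1$, i.e.\ $A=[u]\perp[u^{-1}]$ with $u=a$. If instead $bc$ is a unit, then $b$ and $c$ are units, so $ac=0$ and $bd=0$ force $a=0$ and $d=0$; thus $A=[b]\top[c]$ with $bc=1$, i.e.\ $A=[u]\top[u^{-1}]$ with $u=b$. In either case $\alpha$ takes one of the two asserted forms, and both are automatically invertible, so no separate verification of invertibility is required.

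This is a short and essentially computational lemma, extending Parimala's field-theoretic observation to the local setting, so I do not anticipate a genuine obstacle; the only subtlety worth flagging is that over a local ring the relation $ac=0$ does \emph{not} by itself force $a$ or $c$ to vanish, since local rings may carry zero divisors. The cancellations above are legitimate only once locality has been used to manufacture an honest unit among $a,b,c,d$, and that single observation is the crux of the proof.
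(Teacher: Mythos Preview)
Your proof is correct and in fact cleaner than the paper's. Both arguments begin from $A^{t}\widetilde{\psi_1}A=\widetilde{\psi_1}$ and extract $ac=0$, $bd=0$, $ad+bc=1$, but from there the routes diverge. The paper reduces modulo $\mathfrak m$, invokes Parimala's field case to decide whether $\overline{\det\alpha}=\pm 1$, and then in the ``$+1$'' case performs a small lifting computation (writing $ad=1+m$ with $m\in\mathfrak m$, squaring $\Delta$, and using the auxiliary relations $\Delta a=a$, $\Delta b=-b$, etc.) to force $b=c=0$; the ``$-1$'' case is handled by multiplying by $\widetilde{\psi_1}$ and appealing to the first case. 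You instead use locality in one stroke: from $ad+bc=1$ in a local ring, one of $ad$, $bc$ must be a unit, and then the vanishing relations $ac=bd=0$ immediately kill the remaining entries. Your approach avoids any passage to the residue field, any determinant identities, and any lifting, so it is shorter and more transparent; the paper's approach has the minor expository merit of making the connection to Parimala's remark explicit and visibly separating the two cases by the sign of the determinant.
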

\begin{proof}
Let $\alpha = \begin{pmatrix} a & b\\ c & d\end{pmatrix} \in O_R(h)$ and $ \widetilde{\psi_1}$ denotes the matrix $\begin{pmatrix} 0 & 1\\1 & 0\end{pmatrix}.$ Then one has the relations $\alpha^t \widetilde{\psi_1} \alpha = \widetilde{\psi_1}$, $\alpha \widetilde{\psi_1}
= \widetilde{\psi_1} \alpha^{-1}$. From this one deduces that $\det(\alpha)^2 = 1$, 
and $2ac = 0 = 2bd$, $ad + bc = 1$, $\Delta c = -c$, $\Delta b = -b$, 
$\Delta a = a$, $\Delta d =  d$, where $\Delta = \det(\alpha)$. From these relations it is easy to deduce Parimala's result in the case 
when $R$ is a field. We shall use this below. 

\vspace{1.5mm}
Let the $\overline{\alpha}$ denote $\alpha$ modulo $\mathfrak{m}$. From above equations, it follows that 
$$\overline{\Delta} = \det(\overline{\alpha})   = \pm 1.$$ 
If $\overline{\Delta} = 1$, then $\overline{\alpha} = [u]\perp [u^{-1}]$, for some unit $u \in R/\mathfrak{m}$.
Hence, $a, d$ are units and $b, c \in \mathfrak{m}$. Since $ad = 1$ 
modulo $\mathfrak{m}$, let $ad = 1 + m$, for some $m \in \mathfrak{m}$. Now $1 = \Delta^2 = (1 + m -bc)^2 = 1 + (m-bc)(2 + m - bc)$, whence 
$m - bc = 0$. Hence $\Delta = 1 + m -bc = 1.$  But then the equations $\Delta b = -b$, and $\Delta c = -c$ give
$b = 0 = c$, and so $\alpha = [a]\perp [a^{-1}]$ as 
required. 

\vspace{1.5mm}
If $\overline{\Delta} = -1$, then $\overline{\alpha} = [u] \top [u^{-1}]$, for some unit $u \in R/\mathfrak{m}$. Consider $\beta = \alpha \widetilde{\psi_1} \in O_R(h)$. Then $\det(\beta) = 1$; whence $\beta = [b] \perp  [b^{-1}]$, for some unit $b \in R$. Hence $\alpha = [b]\top [b^{-1}]$, for some unit $b$ in $R$.
 \end{proof}
 \begin{lemma}\label{normal1}
  Let $R$ be a local ring with maximal ideal $\mathfrak{m}$. Assume that $2R=R$. Let $(Q,q)$ be a quadratic space over $R$ of rank $n$. Then the group $\og_R(h)$ normalises $\EO_R(q, h)$.
 \end{lemma}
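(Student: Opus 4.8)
The plan is to use Lemma~\ref{normal2}, which asserts that every element of $\og_R(h)$ is of one of two explicit forms, to reduce the problem to conjugating each elementary generator of $\EO_R(q,h)$ by these two types and checking that every conjugate is again an elementary generator. Here $P = R$, so $P^* = R$, and $\EO_R(q,h)$ is generated by the $E_\alpha$ with $\alpha \in \hom(Q,R)$ together with the $E_\beta^*$ with $\beta \in \hom(Q,R)$; moreover any $\sigma \in \og_R(h)$ acts on $Q \perp h$ as the identity on $Q$. Thus it suffices to compute $\sigma\, E(\alpha)\, \sigma^{-1}$ for $\sigma = [u]\perp[u^{-1}]$ and for $\sigma = [u]\top[u^{-1}]$ and to display the result as a generator. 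Since both $\sigma$ and $\sigma^{-1}$ lie in $\og_R(h)$, closure of the generating set under this conjugation yields $\sigma\, \EO_R(q,h)\, \sigma^{-1} = \EO_R(q,h)$.

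First I would treat the diagonal case $\sigma = [u]\perp[u^{-1}]$, which scales $P$ by $u$ and $P^*$ by $u^{-1}$. Tracking the action on $z \in Q$, $x \in P$, and $f \in P^*$ through the defining formulas for $E_\alpha$, and using $(u\alpha)^* = u\,\alpha^*$ (immediate from $\alpha^* = d_{B_q}^{-1}\circ \alpha^t$ and linearity of the transpose), I expect to obtain $\sigma\, E_\alpha\, \sigma^{-1} = E_{u\alpha}$, and symmetrically $\sigma\, E_\beta^*\, \sigma^{-1} = E_{u^{-1}\beta}^*$. As $u\alpha \in \hom(Q,P)$ and $u^{-1}\beta \in \hom(Q,P^*)$, both conjugates are again generators, so this case is routine once the correction terms involving $\tfrac{1}{2}\alpha\alpha^*$ are seen to rescale consistently.

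The harder case is the antidiagonal $\sigma = [u]\top[u^{-1}]$, which interchanges $P$ and $P^*$, sending $x \in P$ to $u^{-1}x \in P^*$ and $f \in P^*$ to $uf \in P$. Conjugation by such a $\sigma$ should swap the two families of generators: I expect $\sigma\, E_\alpha\, \sigma^{-1}$ to be an $E_\gamma^*$ and $\sigma\, E_\beta^*\, \sigma^{-1}$ to be an $E_\delta$, for suitably rescaled $\gamma, \delta \in \hom(Q,R)$. The main obstacle is precisely this computation, since the interchange of $P$ and $P^*$ forces one to re-express the dual $\alpha^*$ in terms of $\beta^*$ (and conversely) and to keep careful track of the factors of $\tfrac{1}{2}$, the powers of $u$, and the dualization $d_{B_q}^{-1}$ appearing in the correction terms of the elementary generators. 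Once the two conjugates are identified as elements of the opposite family, they lie among the generators of $\EO_R(q,h)$, which completes the proof.
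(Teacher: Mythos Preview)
Your approach is essentially the same as the paper's: invoke Lemma~\ref{normal2} to reduce to the two explicit shapes of elements in $\og_R(h)$, and then verify by direct computation that conjugation of each generator $E_{\alpha_{1j}}$, $E_{\beta_{1j}}^*$ by either shape returns a generator.

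Two small remarks. First, the paper streamlines the antidiagonal case by observing that $\sigma_u = \tau_u\,\sigma_1$, so once the diagonal case $\tau_u$ is done it suffices to treat only the pure swap $\sigma_1 = [1]\top[1]$; you can adopt this and avoid carrying the unit $u$ through the second computation. Second, the antidiagonal case is in fact easier than you anticipate, not harder: since here $P = R = P^*$, the formulas for $E_\alpha$ and $E_\beta^*$ are visibly interchanged by swapping the last two coordinates, so $\sigma_1 E_{\alpha_{1j}}\sigma_1^{-1} = E_{\beta_{1j}}^*$ and $\sigma_1 E_{\beta_{1j}}^*\sigma_1^{-1} = E_{\alpha_{1j}}$ fall out immediately, with no delicate tracking of $d_{B_q}^{-1}$ or the $\tfrac12$-terms required.
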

\begin{proof}
The elements of $\og_R(h)$ are of the form \begin{eqnarray*}
\tau_u = [u]\perp [u^{-1}] & 
\mbox{ or }~ \sigma_u = [u]\top [u^{-1}].
\end{eqnarray*}The elementary generators of $\EO_R(q,h)$ are of the form $E_{\alpha_{1j}}$ or $E_{\beta_{1j}}^*$ for $1\leq j \leq \rank Q = n$. From the following set of equations
\begin{eqnarray*}
(\I_{n}\perp \tau_u )\ E_{\alpha_{1j}}\ (\I_{n}\perp\tau_u^{-1}) &= E_{u\alpha_{1j}},  \\ 
(\I_{n}\perp \tau_u )\ E_{\beta_{1j}}^* (\I_{n}\perp \tau_u^{-1}) &= E_{u\beta_{1j}}^*, \\
(\I_{n}\perp \sigma_1)\  E_{\alpha_{1j}} (\I_{n}\perp \sigma_1^{-1})& = E_{\alpha_{1j}},\\
(\I_{n}\perp \sigma_1)\  E_{\beta_{1j}}^* (\I_{n}\perp \sigma_1^{-1}) &= E_{\beta_{1j}}^*,
\end{eqnarray*}
it is clear that $\og_R(h)$ normalises $\EO_R(q, h)$.
\end{proof}
\begin{corollary}\label{normal4}
 Let $R$ be a local ring with maximal ideal $\mathfrak{m}$. Assume that $2R=R$. Let $(Q,q)$ be a quadratic space of rank $n$ over $R$. Then the elementary orthogonal group $\EO_R(q, h)$ is normal in $\og_R(q\perp h)$. 
\end{corollary}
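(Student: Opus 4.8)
The plan is to combine the decomposition of Lemma~\ref{Rao84} with the normalization statement of Lemma~\ref{normal1}; together with the fact that $\EO_R(q,h)$ is itself a group, these two ingredients force normality immediately. First I would record two standard facts about the local ring $R$. Since $R$ is local and $2$ is invertible, the quadratic space $(Q,q)$ is diagonalizable, so the hypotheses of Lemma~\ref{Rao84} are available. Moreover, the maximal spectrum of a local ring consists of the single point $\mathfrak{m}$, so $\dim \max R = 0$ and hence $m = 1 \ge \dim \max R + 1$. This is exactly the point that legitimizes applying Rao's decomposition in the rank-one case.

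With these observations, Lemma~\ref{Rao84} applied with $m=1$ yields the decomposition
\[
\og_R(q \perp h) = \EO_R(q, h)\cdot \og_R(h),
\]
where $\og_R(h)$ is read as $\I_n \perp \og_R(h)$ acting trivially on $Q$. Thus every $g \in \og_R(q\perp h)$ can be written as $g = \sigma\tau$ with $\sigma \in \EO_R(q,h)$ and $\tau \in \og_R(h)$.

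Next I would verify the normality condition directly using this factorization. Given any elementary element $\epsilon \in \EO_R(q, h)$, I compute
\[
g\,\epsilon\,g^{-1} = \sigma\,(\tau\,\epsilon\,\tau^{-1})\,\sigma^{-1}.
\]
By Lemma~\ref{normal1}, the group $\og_R(h)$ normalizes $\EO_R(q,h)$, so $\tau\,\epsilon\,\tau^{-1} \in \EO_R(q,h)$. Since $\EO_R(q,h)$ is a subgroup and $\sigma$ already lies in it, the conjugate $\sigma\,(\tau\,\epsilon\,\tau^{-1})\,\sigma^{-1}$ again belongs to $\EO_R(q,h)$. As $g$ and $\epsilon$ were arbitrary, this gives $g\,\EO_R(q,h)\,g^{-1} \subseteq \EO_R(q,h)$, which is the desired normality.

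I do not expect a genuine obstacle here, since the substantive work is already contained in Lemmas~\ref{Rao84} and~\ref{normal1}; the corollary is an assembly of the decomposition and the normalization. The only points requiring a word of justification are the two local facts invoked at the outset—that $(Q,q)$ is diagonalizable and that $\dim \max R = 0$, so that the $m=1$ instance of Rao's decomposition is valid—both of which are routine for a local ring in which $2$ is invertible.
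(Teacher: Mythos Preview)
Your proposal is correct and follows essentially the same approach as the paper: apply Lemma~\ref{Rao84} to decompose $\og_R(q\perp h)=\EO_R(q,h)\cdot\og_R(h)$, then use Lemma~\ref{normal1} together with the fact that $\EO_R(q,h)$ normalises itself. Your version is slightly more explicit in verifying the hypotheses of Lemma~\ref{Rao84} (diagonalizability of $q$ and $\dim\max R=0$), which the paper leaves implicit.
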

\begin{proof}
By Lemma~\ref{Rao84}, we have $\og_R(q\perp h) = \EO_R(q, h)\cdot\og_R(h) = \og_R(h)\cdot \EO_R(q,h)$. By Lemma~\ref{normal1}, it follows that $\og_R(q\perp h) $ normalises $\EO_R(q, h)$.
\end{proof}
\begin{lemma}\label{normal3}
 Let $R$ be a local ring with maximal ideal $\mathfrak{m}$. Assume that $2R=R$. Let $(Q,q)$ be a quadratic space. Then the group $\og_R(h)$ normalises $\EO_R(q, h^m)$.
\end{lemma}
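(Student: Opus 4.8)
The plan is to reduce the assertion to the single-plane normality already established in Lemma~\ref{normal1}, exploiting the fact that over a local ring the group $\og_R(h)$ has very few generators. First I would invoke Lemma~\ref{normal2}: every element of $\og_R(h)$ is either $\tau_u = [u]\perp[u^{-1}]$ or $\sigma_u = [u]\top[u^{-1}]$, and since $\sigma_u = \tau_u\,\widetilde{\psi_1}$ we have $\og_R(h)=\langle \tau_u,\ \widetilde{\psi_1}\rangle$. Thus it suffices to show that conjugation by $\tau_u$ and by $\sigma_1=\widetilde{\psi_1}$ — both regarded as acting on the first plane $h_1$ in the decomposition $h^m=h_1\perp\cdots\perp h_m$ and as the identity on $Q$ and on $h_2\perp\cdots\perp h_m$ — carries $\EO_R(q,h^m)$ into itself.

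Next I would replace the full generating set of $\EO_R(q,h^m)$ by the plane-localised generators $E_{\alpha_{ij}}$ and $E^*_{\beta_{ij}}$ ($1\le i\le m$, $1\le j\le n$): writing $P=R^m$ with standard basis $e_1,\dots,e_m$ and dual basis $f_1,\dots,f_m$, so that $h_i=\langle e_i,f_i\rangle$, these are the maps $\alpha_{ij}\colon Q\to Re_i$ and $\beta_{ij}\colon Q\to Rf_i$. I claim they generate $\EO_R(q,h^m)$: an arbitrary $E_\alpha$ with $\alpha=\sum_i\alpha_i$ differs from the product $\prod_i E_{\alpha_i}$ only by cross-plane correction terms, and by Roy's relations each such correction is a commutator of plane-localised generators, hence already lies in the subgroup they generate. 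Since conjugation is a homomorphism, it is then enough to verify that $\tau_u$ and $\sigma_1$ send each plane-localised generator into $\EO_R(q,h^m)$.

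Finally I would split by the plane index $i$. For $i\neq 1$ the generator $E_{\alpha_{ij}}$ (resp. $E^*_{\beta_{ij}}$) acts only on $Q\perp h_i$ and fixes $h_1$, while $\tau_u$ and $\sigma_1$ act only on $h_1$ and fix $Q\perp h_i$; as these supports are orthogonally complementary, the two commute and the generator is fixed by conjugation. For $i=1$ everything in sight fixes $Q\perp h_2\perp\cdots\perp h_m$, so the computation takes place inside $\og_R(q\perp h_1)=\og_R(q\perp h)$, where Lemma~\ref{normal1} gives $\tau_u E_{\alpha_{1j}}\tau_u^{-1}=E_{u\alpha_{1j}}$ and keeps $\sigma_1 E_{\alpha_{1j}}\sigma_1^{-1}$ and $\sigma_1 E^*_{\beta_{1j}}\sigma_1^{-1}$ inside $\EO_R(q,h_1)\subseteq\EO_R(q,h^m)$. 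The embedding $\og_R(q\perp h_1)\hookrightarrow\og_R(q\perp h^m)$ that is the identity on the remaining planes is a homomorphism, so these identities persist in the larger group. Combining the two cases shows that $\tau_u$ and $\sigma_1$ normalise $\EO_R(q,h^m)$, and hence so does all of $\og_R(h)$.

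The only genuinely delicate point is the reduction to plane-localised generators in the second step: one must confirm that the discrepancy between $E_\alpha$ and $\prod_i E_{\alpha_i}$ is accounted for by commutators that themselves lie in $\EO_R(q,h^m)$, so that the plane-localised elements really do generate the whole group. Once this is secured the argument is a bookkeeping of supports together with two appeals to Lemma~\ref{normal1}, and I expect no further obstruction.
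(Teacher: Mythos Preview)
Your approach is essentially the same as the paper's: both reduce, via Lemma~\ref{normal2}, to checking that the two generators $\tau_u$ and $\sigma_1$ of $\og_R(h)$ conjugate each plane-localised generator $E_{\alpha_{ij}}$, $E^*_{\beta_{ij}}$ back into $\EO_R(q,h^m)$, and both split according to whether $i$ matches the distinguished hyperbolic plane. The only cosmetic differences are that the paper embeds $\og_R(h)$ in the \emph{last} plane (writing $\I_{n+2m-2}\perp\tau_u$) rather than the first, and that the paper simply rewrites the four conjugation identities explicitly instead of invoking Lemma~\ref{normal1} as a black box for the matching-plane case. Your worry about whether the plane-localised $E_{\alpha_{ij}}$, $E^*_{\beta_{ij}}$ generate $\EO_R(q,h^m)$ is legitimate, but the paper takes this as a standing convention (see the opening sentences of the proofs of Lemmas~\ref{normal1} and~\ref{normal3}, and the commutator calculus in the reference \cite{aa}), so you need not labour the point here.
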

\begin{proof}
The elementary generators of $\EO_R(q,h^m)$ are either of the form $E_{\alpha_{ij}}$ or $E_{\beta_{ij}}^*$ for $\alpha,\beta \in \hom(Q,R^m)$, $1\leq i \leq m$, and $1\leq j\leq n$.

 For $1\leq i \leq m, 1\leq j\leq n$, we have the set of equations
 \begin{eqnarray*}
(\I_{n+2m-2}\perp \tau_u )\ E_{\alpha_{ij}}\ (\I_{n+2m-2}\perp\tau_u^{-1}) &= \begin{cases} 
E_{u\alpha_{ij}}, \mbox{ if } i=m\\
E_{\alpha_{ij}}, \mbox{ if } i\neq m.
\end{cases}\\ 
(\I_{n+2m-2}\perp \tau_u )\ E_{\beta_{ij}}^* (\I_{n+2m-2}\perp \tau_u^{-1}) &= \begin{cases} 
E_{u\beta_{ij}}^*, \mbox{ if } i=m\\
E_{\beta_{ij}}^*, \mbox{ if } i\neq m.
\end{cases} \\
(\I_{n+2m-2}\perp \sigma_1)\  E_{\alpha_{ij}} (\I_{n+2m-2}\perp \sigma_1^{-1})& = E_{\alpha_{ij}},\\
(\I_{n+2m-2}\perp \sigma_1)\  E_{\beta_{ij}}^* (\I_{n+2m-2}\perp \sigma_1^{-1}) &= E_{\beta_{ij}}^*.
\end{eqnarray*}
From these equations it is clear that $\og_R(h)$ normalises $\EO_R(q, h^m)$.
\end{proof}
\begin{lemma}\label{normal5}
 Let $(Q,q)$ be a quadratic space of rank $n$ over $R$. Then the orthogonal group $\og_R(q)$ normalises $\EO_R(q,h^m)$ for $m\ge 2$.
\end{lemma}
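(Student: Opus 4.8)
The plan is to prove the stronger, generator-wise statement that conjugation by the natural extension of any $\gamma \in \og_R(q)$ carries each DSER generator to another DSER generator. Concretely, I would embed $\og_R(q)$ into $\og_R(q\perp h^m)$ via $\gamma \mapsto \widetilde\gamma := \gamma \perp \I_{2m}$, which indeed lies in $\og_R(q\perp h^m)$ since $q\perp h^m$ is an orthogonal sum and $\gamma$ preserves $q$ while fixing $h^m$. Because $\EO_R(q,h^m)$ is generated by the $E_\alpha$ with $\alpha\in\hom(Q,R^m)$ and the $E_\beta^*$ with $\beta\in\hom(Q,(R^m)^*)$, it suffices to show $\widetilde\gamma\, E(\alpha)\, \widetilde\gamma^{-1}\in\EO_R(q,h^m)$ for each such generator. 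The key claim will be
\[
\widetilde\gamma\, E_\alpha\, \widetilde\gamma^{-1} = E_{\alpha\gamma^{-1}}
\quad\text{and}\quad
\widetilde\gamma\, E_\beta^*\, \widetilde\gamma^{-1} = E_{\beta\gamma^{-1}}^*,
\]
both of which are again generators, since $\alpha\gamma^{-1}\in\hom(Q,R^m)$ and $\beta\gamma^{-1}\in\hom(Q,(R^m)^*)$.

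I would verify the first identity by evaluating both sides on the three summands $Q$, $P=R^m$ and $P^*$. On $Q$ one reads off $z\mapsto z+\alpha\gamma^{-1}(z)$ immediately from $\widetilde\gamma|_Q=\gamma$ and $\widetilde\gamma|_{P}=\widetilde\gamma|_{P^*}=\I$; on $P$ both sides fix $x$; so the only substantive computation is on $P^*$. The heart of the matter is the compatibility of the $(\ )^*$-operation with $\gamma$, namely $(\alpha\gamma^{-1})^* = \gamma\circ\alpha^*$. This I would deduce from the defining relation $(f\circ\alpha)(z)=B_q(\alpha^*(f),z)$ together with the orthogonal invariance $B_q(u,\gamma^{-1}z)=B_q(\gamma u,z)$ valid for $\gamma\in\og_R(q)$: applying the characterization to $f\circ(\alpha\gamma^{-1})$ gives $B_q(\alpha^*(f),\gamma^{-1}z)=B_q(\gamma\alpha^*(f),z)$, whence the claim. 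Granting this, on $f\in P^*$ the image $-\gamma\alpha^*(f)-\tfrac12\alpha\alpha^*(f)+f$ is exactly $E_{\alpha\gamma^{-1}}(f)$, because the correction term $\tfrac12(\alpha\gamma^{-1})(\alpha\gamma^{-1})^*(f)=\tfrac12\,\alpha\gamma^{-1}\gamma\alpha^*(f)=\tfrac12\alpha\alpha^*(f)$ is unchanged by the cancellation $\gamma^{-1}\gamma=\I$. The computation for $E_\beta^*$ is entirely analogous, using the matching identity $(\beta\gamma^{-1})^*=\gamma\circ\beta^*$.

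I expect the only real obstacle to be bookkeeping rather than conceptual: one must keep careful track of which summand each term $\alpha(z)$, $\alpha^*(f)$, $\alpha\alpha^*(f)$ lives in when applying $\widetilde\gamma$, and one must check the $(\ )^*$-characterization in the $\beta$-case, where the definition additionally composes with $\varepsilon:P\to P^{**}$. Once these two compatibility identities are established, conjugation by $\widetilde\gamma$ visibly permutes the generating set of $\EO_R(q,h^m)$, and it follows that $\og_R(q)$ normalizes $\EO_R(q,h^m)$.
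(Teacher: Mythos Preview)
Your proposal is correct and follows essentially the same approach as the paper: both conjugate a generator by $\gamma\perp \I_{2m}$ and reduce to the identity $(\alpha\gamma^{-1})^*=\gamma\circ\alpha^*$, the paper obtaining it from the formula $\alpha^*=d_{B_q}^{-1}\circ\alpha^t$ via $d_{B_q}^{-1}\circ(\gamma^T)^{-1}=\gamma\circ d_{B_q}^{-1}$, while you obtain it from the bilinear-form characterization. Your treatment is in fact slightly more complete, since you address both $E_\alpha$ and $E_\beta^*$ and make explicit the cancellation in the quadratic correction term, whereas the paper writes out only the $E_\beta$ case.
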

\begin{proof}
 Let $\alpha \in \og_R(q)$. Consider the elementary generator $\E_{\beta} \in \EO_R(q\perp h^m)$, where $\beta:Q\rightarrow R^m$. Then
 \begin{eqnarray*}
  (\alpha\perp \I_{2m})\ E_{\beta}\  (\alpha\perp \I_{2m})^{-1} &= \begin{pmatrix}
                                                            \I_{n}&0&-\alpha\beta^*\\
                                                            \beta\alpha^{-1}&I_{m}&
                                                            -\frac{\beta\beta^*}{2}\\
                                                            0&0&I_{m}
                                                           \end{pmatrix}.                                                           
\end{eqnarray*}
Note that $d_{B_q}^{-1} \circ (\alpha^T)^{-1} = \alpha \circ d_{B_q}^{-1}$. Therefore we get $(\beta\circ \alpha^{-1})^* = d_{B_q}^{-1} \circ (\beta\circ\alpha^{-1})^T = d_{B_q}^{-1}\circ(\alpha^{-1})^T \circ \beta^T = \alpha \circ d_{B_q}^{-1} \circ \beta^T = \alpha \circ \beta^*.$ Hence \[(\alpha\perp \I_{2m})\ E_{\beta}\  (\alpha\perp \I_{2m})^{-1} = E_{\beta\circ\alpha^{-1}} \in \EO_R(q, h^m).\]
\end{proof}

\begin{lemma}\label{lemdelta2}
 Let $(Q,q)$ be a quadratic space of rank $n=2r$ over $R$ and let $\varphi = \widetilde{\psi_r}$. Then $\EO_R(q, h^m) = \EO_{n+2m}(R)$, where $\EO_R(q,h^m)$ denote DSER elementary orthogonal group and $\EO_{n+2m}(R)$ denote the usual elementary orthogonal group.
\end{lemma}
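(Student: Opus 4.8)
The plan is to exhibit both $\EO_R(q,h^m)$ and $\EO_{n+2m}(R)$ as subgroups of one and the same split orthogonal group and then to match their generators. Since $\varphi=\widetilde{\psi_r}$, the space $(Q,q)$ is an orthogonal sum of $r$ hyperbolic planes; fixing the associated hyperbolic basis of $Q$ together with the standard basis $x_1,\dots,x_m,f_1,\dots,f_m$ of $H(R)^m$ and reordering the $n+2m$ vectors so that the bilinear form becomes $\widetilde{\psi_{r+m}}$, I identify $\og_R(q\perp h^m)$ with the split orthogonal group $\og_{n+2m}(R)$. Both groups in the statement are then generated by families of matrices inside $\og_{n+2m}(R)$, and it suffices to prove that each family is contained in the group generated by the other.

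First I would check the inclusion $\EO_R(q,h^m)\subseteq\EO_{n+2m}(R)$ by writing the elementary DSER generators as matrices. For an elementary homomorphism $\alpha_{ij}\colon Q\to R^m$ the relation $\alpha^*=d_{B_q}^{-1}\circ\alpha^t$ together with $\varphi=\widetilde{\psi_r}$ gives $\alpha_{ij}\alpha_{ij}^*=0$, so the quadratic correction term disappears and
\[E_{\alpha_{ij}}(t)=\begin{pmatrix}\I_n&0&-t\,\alpha_{ij}^*\\ t\,\alpha_{ij}&\I_m&0\\ 0&0&\I_m\end{pmatrix}.\]
This matrix has exactly two nonzero off-diagonal entries, located in mutually dual positions for the form $\widetilde{\psi_{r+m}}$; that is, it is precisely one standard elementary root generator $\I_{n+2m}+t(e_{kl}-e_{l'k'})$ of $\EO_{n+2m}(R)$, where $k',l'$ denote the indices dual to $k,l$. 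The same computation for $E_{\beta_{ij}}^*$, with $\beta_{ij}\beta_{ij}^*=0$, yields the dual family of root generators. Hence every DSER generator is a single elementary orthogonal matrix.

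For the reverse inclusion $\EO_{n+2m}(R)\subseteq\EO_R(q,h^m)$ I would run through the standard root generators of $\EO_{n+2m}(R)$ according to the blocks in which their indices lie. Those joining the $Q$-block to the $P$- or $P^*$-block are, by the previous paragraph read in reverse, exactly the DSER generators $E_{\alpha_{ij}}$ and $E_{\beta_{ij}}^*$. The root generators internal to $Q$ and those internal to $H(R)^m$ I would instead produce as commutators of DSER generators, using Roy's commutator relations for the $E_\alpha,E_\beta^*$; this is the mechanism already signalled in the introduction through the Calm\`es--Fasel generators, three of which are commutators of $E_{\alpha_{ij}}$- and $E_{\beta_{ij}}^*$-type elements. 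Concretely, a commutator such as $[E_{\alpha_{ij}},E_{\beta_{kl}}^*]$ acts nontrivially only within the $Q$-block and within the $H(R)^m$-block, and by specialising the elementary maps $\alpha,\beta$ one isolates each required internal root generator.

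The hard part will be exactly this reverse inclusion: the bookkeeping that shows every internal root generator --- in particular those supported entirely inside the $Q$-block, which no single DSER generator ever touches --- is recovered from the mixed generators purely by products and commutators, with the correct signs and dual-index pairing. This is where Roy's commutator calculus and the availability of enough hyperbolic indices (ensured by $m\ge 2$ and $\rank Q\ge 2$) are needed. Once every standard root generator of $\EO_{n+2m}(R)$ has been displayed inside $\EO_R(q,h^m)$, the two groups coincide and the lemma follows.
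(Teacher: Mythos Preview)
Your proposal is correct and follows essentially the same route as the paper: identify both groups inside $\og_{n+2m}(R)$ for the form $\widetilde{\psi_{r+m}}$, observe that $\alpha_{ij}\alpha_{ij}^*=0$ so each DSER generator equals a single $oe_{kl}$, and then recover the remaining $oe_{kl}$ (those internal to the $Q$-block or to the $H(R)^m$-block) as commutators of DSER generators. The paper simply makes the last step explicit by listing, case by case in the indices $k,l$, the specific commutators $[E_{\alpha_{is}(a)},E_{\beta_{j,\sigma(s')}(\pm 1)}^*]$ (and the three variants with two $\alpha$'s or two $\beta$'s) that produce each internal $oe_{kl}(a)$; your parenthetical ``$m\ge 2$'' is not actually an extra hypothesis here, since the $i\neq j$ internal generators only exist when $m\ge 2$, and a single hyperbolic index suffices for the $Q$-internal ones.
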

\begin{proof}
 Let $\varphi = \widetilde{\psi_r}$. Then $q\perp h^m$ has the form $\widetilde{\psi}_{r+m}$. Let $\sigma$ denote the permutation matrix associated to this form where $\sigma$ is given by $$\sigma(l) = \begin{cases}
			l+1     & \mbox{ if } l \mbox{ odd }\\
			l-1	& \mbox{ if } l \mbox{ even }                                                                                                                                                                     \end{cases},$$ for $1\leq l \leq n+2m$. 
The elementary generators for $\EO_{n+2m}(R)$ with respect to this form are \
\begin{equation}\label{eq1}
oe_{kl}(a) = \I_{n+2m} + ae_{kl}-ae_{\sigma(l),\sigma(k)} \mbox{ if } k \neq \sigma(l) \mbox{ and } k < l, 
\end{equation}
 for $1\leq k \neq l \leq n+2m$ and $a\in R$. The elementary generators for $\EO_R(q, h^m)$ are 
 \begin{align}
  E_{\alpha_{kl}} &= E_{n+2k-1,l}(a_{kl})E_{\sigma(l), n+2k}(-a_{kl}),\label{eq2}\\
  E_{\beta_{kl}}^* &= E_{n+2k,l}(b_{kl})E_{\sigma(l), n+2k-1}(-b_{kl}),\label{eq3}
 \end{align}
 for $1\leq k \leq m$, $1\leq l \leq n$, $\alpha = (a_{kl})$ and $\beta = (b_{kl})$ for $a_{kl},b_{kl} \in R$.
 
 \vspace{1.5mm}
 From Eq.\eqref{eq1}, Eq.\eqref{eq2} and Eq.\eqref{eq3}, it is clear that for $1\leq k \leq m \mbox{ and } 1\leq l \leq n,$
 \[E_{\alpha_{kl}} = oe_{\sigma(l),n+2k}(-a_{kl})\mbox{ and } E_{\beta_{kl}}^* = oe_{\sigma(l),n+2k-1}(-b_{kl}).\]
 
 Hence \[\EO_R(q, h^m) \subseteq \EO_{n+2m}(R).\]
 To prove the reverse inclusion, it is enough to show that for $1\leq k,l \leq n$ and $n+1 \leq k,l \leq n+2m$, $k\neq \sigma(l)$, $k<l$  and $a\in R$, $oe_{kl}(a) \in \EO_R(q, h^m)$.

 \noindent For $1\leq k,l \leq n$ and for $1\leq i\leq m$,
 \begin{align*}
  oe_{kl}(a) &= oe_{\sigma(l),\sigma(k)}(-a)\\
	     &= [oe_{\sigma(l),n+2i}(-a), oe_{n+2i,\sigma(k)}(1)]\\
	     &= [oe_{\sigma(l),n+2i}(-a), oe_{k,n+2i-1}(-1)]\\
	     &= [E_{\alpha_{il}(a)}, E_{\beta_{i,\sigma(k)}(1)}^*].
 \end{align*}
 For $k=n+2i-1, l=n+2j-1, 1\leq i\neq j \leq m$ and for $1\leq s \leq n$,
 \begin{align*}
  oe_{kl}(a) &= [oe_{n+2i-1,s}(a), oe_{s,n+2j-1)}(1)]\\
	     &= [oe_{\sigma(s),n+2i}(-a), oe_{s, n+2j-1}(1)]\\
	     &= [E_{\alpha_{is}(a)}, E_{\beta_{j,\sigma(s)}(-1)}^*].
 \end{align*}
 For $k=n+2i-1, l=n+2j, 1\leq i,j \leq m$ and for $1\leq s \leq n$,
 \begin{align*}
  oe_{kl}(a) &= [oe_{n+2i-1,s}(a), oe_{s,n+2j)}(1)]\\
	     &= [oe_{\sigma(s),n+2i}(-a), oe_{s, n+2j}(1)]\\
	     &= [E_{\alpha_{is}(a)}, E_{\alpha_{j,\sigma(s)}(-1)}].
 \end{align*}
 For $k=n+2i, l=n+2j-1, 1\leq i < j \leq m$ and for $1\leq s \leq n$,
 \begin{align*}
  oe_{kl}(a) &= [oe_{n+2i,s}(a), oe_{s,n+2j-1)}(1)]\\
	     &= [oe_{\sigma(s),n+2i-1}(-a), oe_{s, n+2j-1}(1)]\\
	     &= [E_{\beta_{is}(a)}^*, E_{\beta_{j,\sigma(s)}(-1)}^*].
 \end{align*}
 For $k=n+2i, l=n+2j, 1\leq i\neq j \leq m$ and for $1\leq s \leq n$,
 \begin{align*}
  oe_{kl}(a) &= [oe_{n+2i,s}(a), oe_{s,n+2j)}(1)]\\
	     &= [oe_{\sigma(s),n+2i-1}(-a), oe_{s, n+2j}(1)]\\
	     &= [E_{\beta_{is}(a)}^*, E_{\alpha_{j,\sigma(s)}(-1)}].
 \end{align*}
 Thus we get  \[\EO_R(q, h^m) = \EO_{n+2m}(R).\]
 \end{proof}
 \begin{proposition}[\cite{aa1}*{Proposition~4.5}]\label{normal}
$ \og_R(q \perp h^{m-1})$ normalizes $ \EO_R(q,h^{m})$, where $q$ and $h^m$ are quadratic spaces over a commutative ring $R$ for $m\ge 2$.
\end{proposition}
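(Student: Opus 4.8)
The plan is to exploit the regrouping $Q\perp h^m=(Q\perp h^{m-1})\perp h$. Write $Q_1=Q\perp h^{m-1}$ with its induced form $q_1$, so that $\og_R(q\perp h^{m-1})=\og_R(q_1)$ sits inside $\og_R(q_1\perp h)=\og_R(q\perp h^m)$ as the subgroup fixing the last hyperbolic plane $h$; for $\sigma\in\og_R(q_1)$ I write $\tilde\sigma=\sigma\perp\I_2$ for this embedding. Since conjugation by $\tilde\sigma$ is an automorphism and $\og_R(q_1)$ is closed under inversion, normality can be checked on generators: it suffices to prove that $\tilde\sigma\, g\,\tilde\sigma^{-1}\in\EO_R(q,h^m)$ for every DSER generator $g$ of $\EO_R(q,h^m)$ and every $\sigma\in\og_R(q_1)$.

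The heart of the argument is the identity
\[
\EO_R(q,h^m)=\EO_R(q_1,h),
\]
that is, the DSER elementary group built from the core $Q$ with $m$ hyperbolic planes coincides with the one built from the enlarged core $Q_1=Q\perp h^{m-1}$ with a single hyperbolic plane. In the free split case this is immediate from Lemma~\ref{lemdelta2}, since both sides equal $\EO_{n+2m}(R)$; in general I would establish it from the commutator relations among the generators $E_\alpha,E_\beta^*$ (the same relations used in the proof of Lemma~\ref{lemdelta2}, now read intrinsically). Concretely, a generator of $\EO_R(q_1,h)$ whose source lies in the $h^{m-1}$--part is a transvection mixing one of the first $m-1$ planes with the last plane, and such an element is a commutator of two source-$Q$ generators of $\EO_R(q,h^m)$; conversely, a source-$Q$ generator landing in the $i$-th plane with $i<m$ is recovered as a commutator $[E_{\gamma'},E^*_{\delta'}]$ of source-$Q_1$ last-plane generators. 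This is exactly where the hypothesis $m\ge 2$ enters: one needs at least one plane besides the last in order to form these commutators.

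Granting the identity, the proposition follows from the computation already carried out in Lemma~\ref{normal5}, applied to the quadratic space $Q_1$ together with the single hyperbolic plane $h$. Indeed, the generators of $\EO_R(q_1,h)$ are $E_{\gamma'}$ and $E^*_{\delta'}$ with $\gamma',\delta'\in\hom(Q_1,R)$, and conjugation by $\tilde\sigma=\sigma\perp\I_2$ gives
\[
\tilde\sigma\,E_{\gamma'}\,\tilde\sigma^{-1}=E_{\gamma'\circ\sigma^{-1}},\qquad
\tilde\sigma\,E^*_{\delta'}\,\tilde\sigma^{-1}=E^*_{\delta'\circ\sigma^{-1}},
\]
by the same matrix identity and adjoint relation $(\gamma'\circ\sigma^{-1})^*=\sigma\circ{\gamma'}^*$ used there, which rest on $d_{B_{q_1}}^{-1}\circ(\sigma^T)^{-1}=\sigma\circ d_{B_{q_1}}^{-1}$. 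Since $\gamma'\circ\sigma^{-1}$ and $\delta'\circ\sigma^{-1}$ again lie in $\hom(Q_1,R)$, the conjugates are once more generators of $\EO_R(q_1,h)=\EO_R(q,h^m)$, and normality follows. Note that the computation of Lemma~\ref{normal5} is valid already for a single hyperbolic plane, so no lower bound on the number of planes is needed at this last stage.

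The main obstacle is the identity $\EO_R(q,h^m)=\EO_R(q_1,h)$ in the generality of an arbitrary core $Q$, not assumed split, over an arbitrary commutative ring in which $2$ is invertible: the free split case is Lemma~\ref{lemdelta2}, but in general one must reproduce the relevant commutator calculus at the level of the intrinsic generators $E_\alpha,E_\beta^*$, keeping track of the adjoints $\alpha^*,\beta^*$ and of the correction terms $-\tfrac12\alpha\alpha^*$, rather than relying on the $oe_{kl}$ matrices. Once this structural identity is in place, the normality statement reduces cleanly to the one-plane instance of Lemma~\ref{normal5}, which avoids ever having to know that $\sigma$ normalizes the ``internal'' elementary group $\EO_R(q,h^{m-1})$.
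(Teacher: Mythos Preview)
Your route is genuinely different from the paper's. The paper first reduces to a local ring via the Local--Global principle (Theorem~\ref{lg}): for $\alpha\in\og_R(q\perp h^{m-1})$ one sets $\theta(T)=\alpha E_{T\beta}\alpha^{-1}$, checks $\theta(0)=\I$, and shows $\theta_{\mathfrak m}(T)$ is DSER-elementary for every maximal ideal $\mathfrak m$. Over a local ring the paper then decomposes $\og_R(q\perp h^{m-1})=\EO_R(q,h^{m-1})\cdot\og_R(h^{m-1})$ by Lemma~\ref{Rao84} and finishes with Lemma~\ref{N2} (which in turn rests on Lemmas~\ref{normal1} and~\ref{N1}). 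No identity of the form $\EO_R(q,h^m)=\EO_R(q_1,h)$ is used or proved.

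That identity is the entire content of your proposal, and it is where the argument has a genuine gap. Lemma~\ref{lemdelta2} applies only when $\varphi=\widetilde{\psi_r}$, i.e.\ when $q$ itself is split hyperbolic; for a general (even diagonal) $q$ neither $\EO_R(q,h^m)$ nor $\EO_R(q_1,h)$ equals $\EO_{n+2m}(R)$, so the appeal to that lemma does not cover the case you need. For the inclusion $\EO_R(q_1,h)\subseteq\EO_R(q,h^m)$ you must show that the last-plane generators with source in $h^{m-1}$---which are Eichler-type elements of $\EO_{2m}(R)$---lie in the DSER group with core $Q$; for the reverse inclusion you must produce each $E_{\alpha_{ij}}$ with $i<m$ from last-plane generators of $\EO_R(q_1,h)$. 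Neither direction is automatic: the commutator $[E_{\gamma'},E^*_{\delta'}]$ is in general \emph{not} a single Roy generator but a Roy generator times an Eichler correction (this is already visible in the formulas in the proof of Lemma~\ref{N1}), and one has to track those extra terms through the calculus of~\cite{aa}. Your sketch (``is recovered as a commutator'') does not do this.

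If the identity can be established in full, your argument is appealing: it sidesteps Local--Global entirely and reduces normality to the one-line conjugation formula whose proof in Lemma~\ref{normal5} indeed works for a single hyperbolic plane. But as written, the proposal replaces the paper's machinery with an unproven structural equality of two DSER groups, and that equality is precisely the hard part.
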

To be self-contained, we sketch a different proof of this proposition.
\begin{lemma}\label{N1}
Let $(Q,q)$ be a diagonalizable quadratic $R$-space. The elementary orthogonal group $\EO_{4}(R)$ normalizes the DSER elementary orthogonal group $\EO_{R}(q,h^{m})$ for $m \ge 2$.
\end{lemma}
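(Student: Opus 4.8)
The plan is to verify normality on generators. Since $\EO_4(R)$ is generated by its elementary root transformations and $\EO_R(q,h^m)$ by the DSER generators $E_{\alpha_{ij}}$, $E_{\beta_{ij}}^*$, it is enough to show that every conjugate $g\,E(\gamma)\,g^{-1}$, with $g$ a generator of $\EO_4(R)$ and $E(\gamma)$ a DSER generator, again lies in $\EO_R(q,h^m)$. Using $m\ge 2$, I would realise $\EO_4(R)$ inside $\og_R(h^m)\subseteq\og_R(q\perp h^m)$ as the elementary orthogonal group of the two hyperbolic planes indexed $m-1$ and $m$, acting as the identity on $Q$ and on the remaining planes. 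As a split group of type $D_2$ it is generated by four one-parameter root subgroups: two Levi subgroups realising the elementary transvections $\I+a\,e_{m-1,m}$ and $\I+a\,e_{m,m-1}$ of $\GL(P)$, and two off-diagonal subgroups acting by skew blocks $P^*\to P$ and $P\to P^*$ supported on the planes $m-1,m$. Generators attached to any other pair of planes act on coordinates disjoint from $E(\gamma)$ and commute with it, so only these four need be treated.

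First I would dispose of the two Levi generators. They are commutators of DSER generators, of the form $[E_{\alpha_{mj}},E_{\beta_{(m-1)j}}^*]$ (compare the commutator identities in the proof of Lemma~\ref{lemdelta2}), and therefore already belong to $\EO_R(q,h^m)$ and normalise it trivially; concretely, writing such a generator as $\operatorname{diag}(\I_n,g,(g^t)^{-1})$ with $g\in\GL(P)$, a block computation using $(g\alpha)^*=\alpha^*g^t$ (in the spirit of Lemma~\ref{normal5}) gives $g\,E_\alpha\,g^{-1}=E_{g\alpha}$ and $g\,E_\beta^*\,g^{-1}=E_{(g^t)^{-1}\beta}^*$. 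The diagonalisability of $(Q,q)$ is used here to make $\alpha^*=\varphi^{-1}\alpha^t$ explicit and to force the $Q$-component of the above commutator to collapse.

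The real content is the conjugation by an off-diagonal generator $w=\I+B$, where $B\colon P^*\to P$ is the skew block supported on the planes $m-1,m$ (the case $P\to P^*$ being symmetric under $\alpha\leftrightarrow\beta^*$). A block computation shows that $E_\alpha$ commutes with $w$, whereas
\[
w\,E_\beta^*\,w^{-1}=E_{B\beta}\cdot L\cdot E_\beta^*,
\]
with $E_{B\beta}$ the DSER generator attached to $B\beta\colon Q\to P$ and $L=\I+\tfrac12 B\beta\beta^*$ a Levi transvection of $\GL(P)$. Since $L\in\EO_R(q,h^m)$ by the previous paragraph and $E_{B\beta},E_\beta^*$ are DSER generators, the conjugate lies in $\EO_R(q,h^m)$.

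I expect the displayed identity to be the main obstacle: one must check that all the quadratic correction terms coming from the $-\tfrac12\gamma\gamma^*$ blocks cancel. This is precisely where diagonalisability is indispensable — for a single-coordinate $\beta$ and skew $B$ one computes $\beta^*B\beta=0$ and $\gamma^*\beta=0$ with $\gamma=B\beta$, and these vanishings annihilate every higher-order term, leaving exactly the three-factor product above. Carrying this bookkeeping out for all four root types, and recalling that generators on other planes commute with $E(\gamma)$, then shows that $\EO_4(R)$ normalises $\EO_R(q,h^m)$.
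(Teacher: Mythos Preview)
Your approach is essentially the same as the paper's: both verify normality on generators by showing that each conjugate $g\,E(\gamma)\,g^{-1}$ of a DSER generator by an $\EO_4(R)$ generator lies in $\EO_R(q,h^m)$. The paper carries this out by listing the explicit conjugation identities in the illustrative case $n=2$, $m=2$ (sixteen formulas of the type $oe_{kl}(a)\,E_{\alpha_{ij}}\,oe_{kl}(-a)=\cdots$), leaving the general case implicit; you instead organize the four $\EO_4$ generators into two Levi and two off-diagonal root subgroups and treat each type uniformly via block computations. Your key identity $w\,E_\beta^*\,w^{-1}=E_{B\beta}\cdot L\cdot E_\beta^*$ (with $L$ a Levi transvection) is correct --- the vanishings $\beta^*B\beta=0$ and $(B\beta)^*\beta=0$ for a single-coordinate $\beta$ and skew $B$ indeed kill all higher-order terms --- and your observation that the Levi transvection $L$ is itself a commutator $[E_{\alpha_{ij}},E_{\beta_{i'j}}^*]$ (with $i\neq i'$, using $\alpha^*\beta=\beta^*\alpha=0$) is what makes the three-factor product land in $\EO_R(q,h^m)$. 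The paper's formulas express the same conjugates as a commutator of DSER generators times two further DSER generators, which amounts to the same thing after unpacking. Your structural organization is cleaner and handles general $n,m$ at once, while the paper's explicit tabulation makes the individual identities immediately verifiable; both routes require the diagonalizability of $q$ only to have single-coordinate generators $\alpha_{ij}$, $\beta_{ij}$ available.
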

\begin{proof}
The elementary orthogonal group $\EO_{4}(R)$ has elementary generators of the form
$oe_{13}(a)$, $oe_{14}(b)$, $oe_{23}(c)$, $oe_{24}(d)$, where $oe_{kl}(x) = \I_{n+2m} + xe_{kl}-xe_{\sigma(l),\sigma(k)}$ if $k\neq \sigma(l)$  and $ k < l, $
 for $1\leq k \neq l \leq n+2m$ and $x\in R$. 
 \vspace{1.5mm}
 The elementary generators of $\EO_{R}(q,h^{m})$ are of the (matrix) form $$E_{\alpha_{ij}(x)} = \I_{n+2m} + x e_{n+2i-1,j} - d_{j}xe_{j,n+2i} - \frac{1}{2} d_{j} x^{2} e_{n+2i-1,n+2i},$$ 
 $$E_{\beta_{ij}(y)}^{*} = \I_{n+2m} + y e_{n+2i,j} - d_{j}ye_{j,n+2i-1} - \frac{1}{2} d_{j} y^{2} e_{n+2i,n+2i-1},$$
where $n = \rank (Q)$, $x,y \in R$, $d_{j} \in R$ is the $(j,j)$th entry of the diagonal matrix $B_{q}^{-1}$, $1\le i \le m$, $1\le j \le n$; $\alpha_{ij}(x)$ is an $m\times n$ matrix with $(i,j)$-th entry is $x$ and all other entries $0$ and $\beta_{ij}(y)$ is an $m\times n$ matrix with $(i,j)$-th entry is $y$ and all other entries $0$. 

\vspace{1.5mm}
We need to prove that each $oe_{kl}$ normalizes both $E_{\alpha_{ij}}$ and $E_{\beta_{ij}}^{*}$ for all $1\le i \le m$ and $1 \le j \le n$. We illustrate one example here:

\vspace{1.5mm}

Let $n=2$ and $m=2$. Then the elementary orthogonal group $\EO_{R}(q,h^{2})$ has elementary generators $E_{\alpha_{11}(a_{11})}$,  $E_{\alpha_{12}(a_{12})}$,  $E_{\alpha_{21}(a_{21})}$,  $E_{\alpha_{22}(a_{22})}$, $E_{\beta_{11}(b_{11})}^{*}$, $E_{\beta_{12}(b_{12})}^{*}$, $E_{\beta_{21}(b_{21})}^{*}$, $E_{\beta_{22}(b_{22})}^{*}$, where $a_{11}$, $a_{12}$, $a_{21}$, $a_{22}$, $b_{11}$,   $b_{12}$, $b_{21}$, $b_{22} \in R$.

\begin{align*}
oe_{13}(a)E_{\alpha_{11}(a_{11})}oe_{13}(-a) &= E_{\alpha_{11}(a_{11})},\\
oe_{13}(a)E_{\alpha_{12}(a_{12})}oe_{13}(-a) &= E_{\alpha_{12}(a_{12})},\\
oe_{13}(a)E_{\alpha_{21}(a_{21})}oe_{13}(-a) &= E_{\alpha_{11}(\frac{1}{2}aa_{21})}E_{\alpha_{21}(a_{21})}E_{\alpha_{11}(\frac{1}{2}aa_{21})} = E_{\alpha_{11}(aa_{21})+\alpha_{21}(a_{21})}, \\
oe_{13}(a)E_{\alpha_{22}(a_{22})}oe_{13}(-a) &= E_{\alpha_{12}(\frac{1}{2}aa_{22})}E_{\alpha_{22}(a_{22})}E_{\alpha_{12}(\frac{1}{2}aa_{22})} = E_{\alpha_{12}(aa_{22})+\alpha_{22}(a_{22})}, \\
oe_{14}(b)E_{\alpha_{11}(a_{11})}oe_{14}(-b) &= E_{\alpha_{11}(a_{11})},\\
 oe_{14}(b)E_{\alpha_{12}(a_{12})}oe_{14}(-b) &= E_{\alpha_{12}(a_{12})},\\
oe_{14}(b)E_{\alpha_{21}(a_{21})}oe_{14}(-b) &= E_{\alpha_{21}(a_{21})}, \\
oe_{14}(b)E_{\alpha_{22}(a_{22})}oe_{14}(-b) &= E_{\alpha_{22}(a_{22})},\\
oe_{23}(c)E_{\alpha_{11}(a_{11})}oe_{23}(-c) &= \left[ E_{\beta_{21}(\frac{1}{2}ca_{11})}^{*}, E_{\alpha_{11}(a_{11})}\right] E_{\beta_{21}(-ca_{11})}^{*}E_{\alpha_{11}(a_{11})},&\\
 oe_{23}(c)E_{\alpha_{12}(a_{12})}oe_{23}(-c) &=  \left[ E_{\beta_{22}(\frac{1}{2}ca_{12})}^{*}, E_{\alpha_{12}(a_{12})}\right] E_{\beta_{22}(-ca_{12})}^{*}E_{\alpha_{12}(a_{12})},&\\
 oe_{23}(c)E_{\alpha_{21}(a_{21})}oe_{23}(-c) &= \left[ E_{\beta_{11}(\frac{1}{2}ca_{21})}^{*}, E_{\alpha_{21}(a_{21})}\right] E_{\beta_{11}(ca_{21})}^{*}E_{\alpha_{21}(a_{21})},&\\
  oe_{23}(c)E_{\alpha_{22}(a_{22})}oe_{23}(-c) &=  \left[ E_{\alpha_{22}(a_{22})}, E_{\beta_{12}(\frac{1}{2}ca_{22})}^{*}\right] E_{\beta_{12}(ca_{22})}^{*}E_{\alpha_{22}(a_{22})},&\\
  oe_{24}(d)E_{\alpha_{11}(a_{11})}oe_{24}(-d) &= \left[ E_{\alpha_{21}(\frac{1}{2}da_{11})}, E_{\alpha_{11}(a_{11})}\right] E_{\alpha_{21}(-da_{11})} E_{\alpha_{11}(a_{11})},&\\
 oe_{24}(d)E_{\alpha_{12}(a_{12})}oe_{24}(-d) &=\left[ E_{\alpha_{22}(\frac{1}{2}da_{12})}, E_{\alpha_{12}(a_{12})}\right] E_{\alpha_{22}(-da_{12})} E_{\alpha_{12}(a_{12})},&\\
  oe_{24}(d)E_{\alpha_{21}(a_{21})}oe_{24}(-d) &= E_{\alpha_{21}(a_{21})},\\
  oe_{24}(d)E_{\alpha_{22}(a_{22})}oe_{24}(-d) &= E_{\alpha_{22}(a_{22})}.
\end{align*}
We will get similar relations for $E_{\beta_{11}(b_{11})}^{*}$, $E_{\beta_{12}(b_{12})}^{*}$, $E_{\beta_{21}(b_{21})}^{*}$, and $E_{\beta_{22}(b_{22})}^{*}$.
Hence it follows that the group $\EO_{4}(R)$ normalizes the DSER elementary orthogonal group $\EO_{R}(q,h^{2})$. 
\end{proof}
\begin{lemma}\label{N2}
Let $R$ be a local ring. The orthogonal group $\og_{R}(h^{m-1})$ normalizes the DSER elementary orthogonal group $\EO_{R}(q,h^{m})$ for $m\ge 2$.
\end{lemma}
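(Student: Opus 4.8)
The plan is to reduce $\og_R(h^{m-1})$ to two manageable families of generators over the local ring and to dispatch each family using the lemmas already at hand. Since $R$ is local with $2R=R$, the space $(Q,q)$ is diagonalizable, so I may assume $B_q$ is diagonal with unit entries $d_j$ and use the explicit matrix descriptions of the DSER generators recorded in Lemma~\ref{N1}. The case $m=2$ is immediate, since there $h^{m-1}=h$ and the assertion is exactly Lemma~\ref{normal3}; so I will assume $m\ge 3$.

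First I would establish a generation statement for the hyperbolic orthogonal group. Writing $h^{m-1}=h\perp h^{m-2}$ and applying Rao's decomposition (Lemma~\ref{Rao84}) with the diagonalizable base $h$ — legitimate because $\dim\max R=0$ forces the hypothesis $m-2\ge 1$ to reduce to $m\ge 3$ — gives $\og_R(h^{m-1})=\EO_R(h,h^{m-2})\cdot\og_R(h^{m-2})$. By Lemma~\ref{lemdelta2} the factor $\EO_R(h,h^{m-2})$ is the ordinary elementary orthogonal group $\EO_{2(m-1)}(R)$ on the $m-1$ hyperbolic planes. Iterating the decomposition on the trailing factor $\og_R(h^{m-2})$ and absorbing all the elementary factors, each embedded inside $\EO_{2(m-1)}(R)$, I arrive at
\[
\og_R(h^{m-1}) = \EO_{2(m-1)}(R)\cdot\og_R(h),
\]
where $\og_R(h)$ now sits on the last hyperbolic plane. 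It therefore suffices to show that each of these two factors normalizes $\EO_R(q,h^m)$.

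The torus factor $\og_R(h)$ on the last plane normalizes $\EO_R(q,h^m)$ by Lemma~\ref{normal3}, since its generators are precisely the $\tau_u$ and $\sigma_u$ treated there. For the elementary factor I would prove the stronger inclusion $\EO_{2(m-1)}(R)\subseteq\EO_R(q,h^m)$, from which normalization is automatic. Every standard generator $oe_{kl}(a)$ of $\EO_{2(m-1)}(R)$ has both indices in the hyperbolic block, and the condition $k\neq\sigma(l)$ forces it to connect two \emph{distinct} hyperbolic planes $i\neq j$. Fixing a coordinate $s$ of $Q$ (here $\rank Q\ge 1$ is used) and using that $d_s$ is a unit, a direct commutator computation with the matrix forms of Lemma~\ref{N1} shows that $[E_{\alpha_{is}(x)},E_{\beta_{js}(y)}^{*}]$ — together with the analogous commutators handling the other three index types — yields $oe_{kl}(\pm d_s xy)$ up to further DSER generators already known to lie in $\EO_R(q,h^m)$. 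Cancelling those factors places each $oe_{kl}(a)$ in $\EO_R(q,h^m)$.

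Assembling these, every generator of $\og_R(h^{m-1})$ normalizes $\EO_R(q,h^m)$, hence so does $\og_R(h^{m-1})$. I expect the main obstacle to be the last step: verifying that the purely hyperbolic elementary generators really lie in $\EO_R(q,h^m)$. Unlike the hyperbolic-base situation of Lemma~\ref{lemdelta2}, over a local ring $Q$ carries no hyperbolic pairing, so the commutator identities must be re-derived by routing through a single $Q$-coordinate (which introduces the unit factors $d_s$), and one must keep careful track of the quadratic correction terms appearing in the matrix forms of $E_{\alpha}$ and $E_{\beta}^{*}$ so that the residual factors are genuine DSER generators that can be cancelled. The generation step via Rao's lemma, by contrast, is essentially bookkeeping.
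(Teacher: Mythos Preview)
Your proposal is correct and shares the same skeleton as the paper's argument: reduce to $m\ge 3$, use Rao's decomposition (Lemma~\ref{Rao84}) together with Lemma~\ref{lemdelta2} to write $\og_R(h^{m-1})=\EO_{2(m-1)}(R)\cdot\og_R(h)$, dispatch the $\og_R(h)$ factor via Lemma~\ref{normal3}, and then argue that the elementary factor $\EO_{2(m-1)}(R)$ normalizes $\EO_R(q,h^m)$. The difference lies only in this last step. The paper observes that every generator of $\EO_{2(m-1)}(R)$ sits in some copy of $\EO_4(R)$ and invokes Lemma~\ref{N1}, whose proof is the long catalogue of conjugation identities you saw. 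You instead prove the stronger containment $\EO_{2(m-1)}(R)\subseteq\EO_R(q,h^m)$ by realizing each purely hyperbolic generator $oe_{kl}(a)$ as a commutator $[E_{\alpha_{is}(x)},E_{\beta_{js}(y)}^{*}]$ (and the three analogous variants) routed through a single $Q$-coordinate $s$. Your route is self-contained and bypasses Lemma~\ref{N1} entirely, at the price of rederiving some commutator identities; the paper's route amortizes that computation into the preceding lemma. Two minor remarks: first, a single application of Lemma~\ref{Rao84} with base $h^{m-2}$ already yields $\og_R(h^{m-1})=\EO_R(h^{m-2},h)\cdot\og_R(h)$, so the iteration you describe is unnecessary; second, a direct matrix check shows that for $i\neq j$ one has $[E_{\alpha_{is}(x)},E_{\beta_{js}(y)}^{*}]=oe_{n+2i-1,\,n+2j-1}(-d_sxy)$ on the nose, with no residual DSER factors, so the cancellation step you anticipated as the main obstacle does not in fact arise.
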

\begin{proof}
For $m=2$, this lemma reduces to Lemma~\ref{normal1}. For $m > 2$, we have $\og_{R}(h^{m-1}) = \EO_{R}(h^{m-2},h)\cdot \og_{R}(h)$ by Lemma~\ref{Rao84}. The orthogonal group $\og_{R}(h)$ normalizes $\EO_{R}(q,h^{m})$ by Lemma~\ref{normal1}. The elementary orthogonal group $\EO_{R}(h^{m-2},h) \cong \EO_{2m-2}(R)$. Any elementary generator in $\EO_{2m-2}(R)$ can be considered as an element in some $\EO_{4}(R)$. Hence by Lemma~\ref{N1}, $\EO_{2m-2}(R)$ normalizes $\EO_{R}(q,h^{m})$.
\end{proof}
\begin{proof}[of Proposition~\ref{normal}]
Let $\alpha \in \og_R(q \perp h^{m-1})$. It is enough to prove that $\alpha$ normalises the elementary generators $E_{\beta}, E_{\gamma}^*$ in $\EO_R(q, h^m)$ for $\beta, \gamma \in \hom_{R}(Q,R^{m})$. Let $\theta(T) = \alpha E_{T\beta}\alpha^{-1} \in \EO_{R[T]}(q\otimes R[T], h^m \otimes R[T])$. Then $\theta(0) = \I$ and $(\theta(T))_{\mathfrak{m}} = \alpha_{\mathfrak{m}} {(E_{T\beta})}_{\mathfrak{m}}{(\alpha^{-1})}_{\mathfrak{m}}$.

\vspace{1.5mm}
If $(\theta(T))_{\mathfrak{m}} \in \EO_{(R[T])_{\mathfrak{m}}}(q\otimes {(R[T])_{\mathfrak{m}}},h^m \otimes {(R[T])_{\mathfrak{m}}})$ for all ${\mathfrak{m}} \in \max(R)$, then $\theta(T) \in \EO_{R[T]}(q\otimes R[T],h^m \otimes R[T])$ by Theorem~\ref{lg}. Hence we have $\theta(1) = \alpha E_{\beta}\alpha^{-1} \in \EO_R(q,h^m)$. So we can assume $R$ to be a local ring.
\vspace{1.5mm}
Assume that $R$ is a local ring. Then by Lemma~\ref{Rao84}, we have $ \og_R(q \perp h^{m-1}) = \EO_{R}(q,h^{m-1})\cdot \og_{R}(h^{m-1})$. By Lemma~\ref{N2}, we have $\og_{R}(h^{m-1})$ normalises $\EO_{R}(q,h^{m})$ and $\EO_{R}(q,h^{m-1}) \subset \EO_{R}(q,h^{m})$. Hence $ \og_R(q \perp h^{m-1})$ normalizes $ \EO_R(q,h^{m})$.
\end{proof}

Now we prove the normality of the DSER group $\EO_{R}(Q, H(P))$ when $P$ is free $R$-module.
\begin{theorem}\label{main1}
  $\EO_{R}(q, h^m) \vartriangleleft \og_{R}(q\perp h^m)$, where $q$ and $h^m$ are quadratic spaces over a commutative ring $R$ and $m\ge 2$.
\end{theorem}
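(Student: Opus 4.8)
The plan is to run the Local--Global strategy of Proposition~\ref{normal}, but now feeding in the full hyperbolic orthogonal group $\og_R(h^m)$ in place of $\og_R(h^{m-1})$. Fix $\alpha\in\og_R(q\perp h^m)$ and an elementary generator $E(\beta)$ with $\beta\in\hom_R(Q,R^m)$; the goal is $\alpha E(\beta)\alpha^{-1}\in\EO_R(q,h^m)$. First I would dilate the parameter, setting $\theta(T)=\alpha E(T\beta)\alpha^{-1}\in\og_{R[T]}(q\perp h^m)$, so that $\theta(0)=\I$. By the Local--Global principle (Theorem~\ref{lg}) it is enough to check that each localisation $\theta(T)_{\mathfrak{m}}$ lies in $\EO_{R_{\mathfrak{m}}[T]}(q,h^m)$ for every $\mathfrak{m}\in\max(R)$; granting this, $\theta(T)\in\EO_{R[T]}(q,h^m)$, and specialising $T=1$ yields the claim. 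This reduces the problem to a local ring $R$, over which $2$ is still invertible and $(Q,q)$ is diagonalisable.

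So assume $R$ is local. Since $\dim\max R=0$, Rao's decomposition (Lemma~\ref{Rao84}) already applies for a single hyperbolic plane and gives $\og_R(q\perp h^m)=\EO_R(q,h^m)\cdot\og_R(h^m)$. Write $\alpha=\varepsilon\gamma$ with $\varepsilon\in\EO_R(q,h^m)$ and $\gamma\in\og_R(h^m)$. As $\varepsilon$ is a constant element of $\EO_R(q,h^m)\subseteq\EO_{R[T]}(q,h^m)$, conjugation by it preserves $\EO_{R[T]}(q,h^m)$, so that $\alpha E(T\beta)\alpha^{-1}=\varepsilon\bigl(\gamma E(T\beta)\gamma^{-1}\bigr)\varepsilon^{-1}$ lies in $\EO_{R[T]}(q,h^m)$ as soon as $\gamma E(T\beta)\gamma^{-1}$ does. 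Thus it suffices to show that the constant matrix $\gamma\in\og_R(h^m)$ normalises $\EO_{R[T]}(q,h^m)$.

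To this end I would decompose the hyperbolic group once more: a second application of Lemma~\ref{Rao84}, now with base space $h^{m-1}$, gives $\og_R(h^m)=\EO_R(h^{m-1},h)\cdot\og_R(h)$. The factor $\og_R(h)$ normalises $\EO_R(q,h^m)$ by Lemma~\ref{normal3}, the relevant conjugation formulas being polynomial identities that remain valid when applied to $E(T\beta)$ over $R[T]$. For the factor $\EO_R(h^{m-1},h)$, Lemma~\ref{lemdelta2} identifies it with the ordinary elementary orthogonal group $\EO_{2m}(R)$ on the $2m$ hyperbolic coordinates; each standard generator $oe_{kl}$ involves only four coordinates and so lies in a copy of $\EO_4(R)$, which normalises $\EO_R(q,h^m)$ by Lemma~\ref{N1}. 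Putting the two factors together shows that $\gamma$ normalises $\EO_{R[T]}(q,h^m)$, which completes the local case and, through the Local--Global principle, the theorem.

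The principal step to get right is Lemma~\ref{N1}: one must verify that the purely hyperbolic elementary transvections conjugate each DSER generator $E_{\alpha_{ij}}$ and $E_{\beta_{ij}}^{*}$ back into $\EO_R(q,h^m)$, the commutators thereby produced being absorbed with the help of a second hyperbolic plane. This is exactly where the hypothesis $m\ge2$ enters, and it is the most computational point. A secondary technical matter is that the conjugation identities of Lemmas~\ref{normal3} and~\ref{N1}, though verified over the local ring $R$, must be read functorially, so that --- $\gamma$ having constant entries --- they persist over $R[T]$ when applied to the dilated generator $E(T\beta)$; this functoriality is precisely what lets the Local--Global machine close the argument.
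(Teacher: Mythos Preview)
Your argument is correct and follows essentially the same route as the paper: reduce to a local ring via Theorem~\ref{lg}, then use Lemma~\ref{Rao84} twice to factor $\og_R(q\perp h^m)=\EO_R(q,h^m)\cdot\EO_R(h^{m-1},h)\cdot\og_R(h)$, handle the $\og_R(h)$ factor by Lemma~\ref{normal3}, and identify $\EO_R(h^{m-1},h)$ with $\EO_{2m}(R)$ via Lemma~\ref{lemdelta2}. The only cosmetic difference is in treating this last factor: the paper observes that each generator of $\EO_{2m}(R)$ sits inside some $\og_R(h^{m-1})\subset\og_R(q\perp h^{m-1})$ and then invokes Proposition~\ref{normal}, whereas you observe that each generator sits inside some $\EO_4(R)$ and invoke Lemma~\ref{N1} directly --- since the proof of Proposition~\ref{normal} itself rests on Lemma~\ref{N1} (through Lemma~\ref{N2}), your shortcut is equivalent and arguably tidier, and it handles the case $m=2$ without a separate remark.
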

\begin{proof}
Let $\alpha \in \og_R(q \perp h^m)$ and $E_{\beta}, E_{\gamma}^*$, elementary generators in $\EO_R(q, h^m)$. Let $$\theta(T) = \alpha E_{T\beta}\alpha^{-1} \in \EO_{R[T]}(q\otimes R[T], h^m \otimes R[T]).$$ Then $\theta(0) = \I$ and $(\theta(T))_{\mathfrak{m}} = \alpha_{\mathfrak{m}} {(E_{T\beta})}_{\mathfrak{m}}{(\alpha^{-1})}_{\mathfrak{m}}$. If $(\theta(T))_{\mathfrak{m}} \in \EO_{(R[T])_{\mathfrak{m}}}(q\otimes {(R[T])_{\mathfrak{m}}},h^m \otimes {(R[T])_{\mathfrak{m}}})$ for all ${\mathfrak{m}} \in \max(R)$, then by Theorem~\ref{lg}, $$\theta(T) \in \EO_{R[T]}(q\otimes R[T],h^m \otimes R[T]).$$ Hence we have $\theta(1) = \alpha E_{\beta}\alpha^{-1} \in \EO_R(q,h^m)$. So we can assume $R$ to be a local ring.

\vspace{1.5mm}
Assume that $R$ is a local ring. Then $q$ is diagonalizable. By Lemma~\ref{Rao84}, we have the decomposition
\[\og_R(q \perp h^m) = \EO_R(q, h^m)\cdot \og_R(h^m) = \EO_R(q,h^{m})\cdot\EO_R(h^{m-1},h)\cdot\og_R(h).\]
Note that by Lemma~\ref{normal3}, $\og_R(h)$ normalises $\EO_R(q, h^m)$. By Lemma~\ref{lemdelta2}, we conclude that $\EO_R(h^{m-1}, h) = \EO_{2m}(R)$. Since any elementary generator of $\EO_{2m}(R)$ is an element of some $\EO_{{2m-2}}(R)$ for $m>2$, it will be contained in $\bigcup \og_R(h^{m-1}) \subset \og_R(q\perp h^{m-1})$. But by Proposition~\ref{normal}, we have  $\og_R(q,h^{m-1})$ normalises $\EO_R(q,h^{m})$. Hence the group $\og_R(q\perp h^m)$ normalises $\EO_R(q, h^m)$.
\end{proof}

\section{Local-Global principle for the DSER group}
In this section, we prove  an analogue of Quillen's Local-Global principle(extended case) for the DSER elementary orthogonal group. 
We take $M = Q\perp H(P) $ and $M[X]$ will denote $(Q\perp H(P) )[X]$. We assume that the rank of the projective module $Q$ is $\geq 1$ and $\rank P \geq 2$. 
\vspace{1.5mm}

We recall lemmas from \cite{aarr}. 
\begin{lemma}[\cite{aarr}*{Lemma~5.5}]\label{split} 
Let $Q$, $P$ be free $R$-modules of rank $n$ and $m$ respectively. The kernel of the evaluation map $\EO_{A[X]}(Q[X],H(P[X]))\rightarrow \EO_A(Q,H(P))$ at $X=0$ is generated by the elements of the type  $\gamma E\left(X\alpha_{ij}(X)\right) \gamma^{-1}$, where $\gamma \in\EO_A(Q,H(P))$, $\alpha_{ij}(X) \in \hom(Q[X],P[X])$ or $\hom(Q[X],P^{\ast}[X])$.
\end{lemma}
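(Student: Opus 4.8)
The plan is to start from an arbitrary element $g(X)$ of the kernel, write it as a word $g(X) = \prod_{k=1}^{s} E(\alpha_k(X))$ in the DSER elementary generators, peel off the value of each generator at $X=0$, and then push all the resulting constant factors to the left of the word, where (because $g(0)=\I$) they must cancel. What remains is automatically a product of conjugates of the advertised shape, so no appeal to the local--global machinery is required: the statement is a purely formal splitting over the single polynomial extension $A[X]$.

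The algebraic engine is the additivity of the DSER generators within a fixed type. A direct computation from Roy's formulas (using the symmetry of $B_q$, which yields $\beta\beta'^{*} = \beta'\beta^{*}$ and the analogous identity for the $E_\alpha$) shows $E(\alpha)E(\alpha') = E(\alpha+\alpha')$ whenever $\alpha,\alpha'$ share the same target $P$ (resp. $P^{*}$); in other words, the generators of each fixed type form an abelian subgroup isomorphic to the additive group of homomorphisms. Consequently, for any $\alpha(X) \in \hom(Q[X],P[X])$ one has the factorization
\[ E(\alpha(X)) = E(\alpha(0))\,E\bigl(X\widetilde{\alpha}(X)\bigr), \qquad \widetilde{\alpha}(X) = \tfrac{1}{X}\bigl(\alpha(X)-\alpha(0)\bigr) \in \hom(Q[X],P[X]), \]
in which $E(\alpha(0))$ is a constant generator lying in $\EO_A(Q,H(P))$ and $E(X\widetilde{\alpha}(X))$ lies in the evaluation kernel.

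Now I would set $c_k = E(\alpha_k(0))$ and $u_k(X) = E(X\widetilde{\alpha}_k(X))$, so that $g(X) = c_1 u_1 c_2 u_2 \cdots c_s u_s$, and collect the constants by the standard reshuffling
\[ g(X) = (c_1\cdots c_s)\,\prod_{k=1}^{s} e_k^{-1}\,u_k(X)\,e_k, \qquad e_k = c_{k+1}\cdots c_s, \]
verified by induction on $s$. Each $e_k$ is a product of constant generators, hence $e_k \in \EO_A(Q,H(P))$. Evaluating at $X=0$ gives $\I = g(0) = c_1\cdots c_s$, so the leading factor vanishes and $g(X) = \prod_{k=1}^{s} e_k^{-1}\,E(X\widetilde{\alpha}_k(X))\,e_k$. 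Splitting each $E(X\widetilde{\alpha}_k(X))$ into its coordinate factors $\prod_{i,j} E\bigl(X(\widetilde{\alpha}_k)_{ij}(X)\bigr)$ by additivity once more exhibits $g(X)$ as a product of terms $\gamma\,E(X\alpha_{ij}(X))\,\gamma^{-1}$ with $\gamma = e_k^{-1} \in \EO_A(Q,H(P))$, which is precisely the claimed generating set; the set is visibly closed under inverses since $E(\cdot)^{-1} = E(-\cdot)$.

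The one genuine point to watch — and the main obstacle to making the argument fully airtight — is that additivity is available \emph{only} within a single generator type, so the two pieces $E(\alpha(0))$ and $E(X\widetilde{\alpha}(X))$ must inherit the source/target data of the generator being split, and one never multiplies an $E_\alpha$ with an $E^{*}_\beta$ additively. Everything else is routine bookkeeping, and the exactness of the additivity identity (rather than an additivity only up to a cross term) is what makes the clean factorization $E(\alpha(X)) = E(\alpha(0))E(X\widetilde{\alpha}(X))$ legitimate.
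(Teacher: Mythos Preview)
Your overall strategy---peel off the constant term of each generator, push the constants to the left where they cancel, and recognize what remains as a product of the advertised conjugates---is the right one, and it is the standard argument for this kind of splitting lemma (cf.\ the linear case $\E_n(R[X])$). The problem is the algebraic engine you invoke to drive it.

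The identity $E(\alpha)E(\alpha')=E(\alpha+\alpha')$ is \emph{false} for general $\alpha,\alpha'\in\hom(Q,P)$. A direct computation with Roy's formulas on the $P^*$-component gives
\[
\bigl(E_\alpha E_{\alpha'}-E_{\alpha+\alpha'}\bigr)(f)=-\tfrac12\bigl(\alpha\alpha'^{*}-\alpha'\alpha^{*}\bigr)(f),\qquad f\in P^*,
\]
and the symmetry of $B_q$ only says that $\alpha\alpha'^{*}$ and $\alpha'\alpha^{*}$ are \emph{transposes} of one another as maps $P^*\to P$, not that they coincide. For a concrete failure, take $\alpha=\alpha_{11}(a)$ and $\alpha'=\alpha_{21}(b)$: then $\alpha\alpha'^{*}$ has its nonzero entry in position $(1,2)$ while $\alpha'\alpha^{*}$ has it in position $(2,1)$. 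The computations in Lemma~\ref{N1} of this paper witness the same phenomenon: $E_{\alpha_{11}(aa_{21})+\alpha_{21}(a_{21})}$ is expressed as the \emph{symmetric} product $E_{\alpha_{11}(\frac12 aa_{21})}E_{\alpha_{21}(a_{21})}E_{\alpha_{11}(\frac12 aa_{21})}$, not as $E_{\alpha_{11}(aa_{21})}E_{\alpha_{21}(a_{21})}$.

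This breaks your argument in two places: the factorization $E(\alpha(X))=E(\alpha(0))\,E(X\widetilde\alpha(X))$ for a general $\alpha(X)$, and the final ``split into coordinate factors'' step. The repair is to work from the outset with the one-parameter coordinate generators $E_{\alpha_{ij}(f(X))}$ and $E^{*}_{\beta_{ij}(g(X))}$, $f,g\in A[X]$: for these the needed additivity $E_{\alpha_{ij}(a)}E_{\alpha_{ij}(b)}=E_{\alpha_{ij}(a+b)}$ \emph{does} hold, since $\alpha_{ij}(a)\alpha_{ij}(b)^{*}$ is then a scalar multiple of $e_{ii}$ and hence symmetric. Once you write $g(X)$ as a word in these coordinate generators (which in the free case generate the DSER group, though this itself deserves a reference or a short argument), your reshuffling goes through verbatim, and the ``split into coordinates'' step is simply no longer needed.
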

\begin{lemma}[(Dilation Lemma)\cite{aarr}*{Lemma~5.7}]\label{dilation}
Let $Q,P$ be free modules of rank $n$ and $m$
respectively. Let $s$ be a non-nilpotent element of $R$ and $M = Q
\!\perp\! H(P).$ Let $\theta(X) \in~\og_{R[X]}(M[X])$ with $\theta(0)=\I.$
Let $Y,Z \in \hom(Q,P)$ or $\hom(Q,P^*)$. If
$\theta_s(X) :=$ $(\theta(X))_s$ is  an element of $\EO_{{R_s[X]}}\left(Q \otimes R_{s}[X], H(R)^{m} \otimes R_{s}[X]\right)$, then
for $N \gg 0$ and for all $b \in (s^N) R$, we have $\theta(bX) \in
\EO_{R[X]}\left(Q \otimes R[X], H(R)^{m} \otimes R[X]\right)$.
\end{lemma}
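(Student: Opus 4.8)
The plan is to combine the splitting of the evaluation kernel (Lemma~\ref{split}) with an absorption argument that clears the denominators introduced by inverting $s$, where the denominators are kept under control by the substitution $X \mapsto bX$ for $b$ a high power of $s$. Throughout I only use the two features of the hypothesis on $\theta$, namely $\theta(0)=\I$ and $\theta_s(X)\in\EO_{R_s[X]}$; the declared $Y,Z$ serve only to record the shape of $\theta$ and play no role in the conclusion.

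First I would pass to $R_s$. Since $\theta_s(X)$ lies in $\EO_{R_s[X]}(Q\otimes R_s[X], H(R)^m\otimes R_s[X])$ and is trivial at $X=0$, Lemma~\ref{split} lets me write it as a finite product
\[
\theta_s(X) \;=\; \prod_{k=1}^{t}\gamma_k\,E\!\left(X\,\alpha_k(X)\right)\gamma_k^{-1},
\]
with $\gamma_k\in\EO_{R_s}(Q,H(P))$ and each $\alpha_k(X)$ a homomorphism over $R_s[X]$ into $P$ or $P^*$ (so $E(\cdot)$ is $E_{\alpha_k}$ or $E_{\alpha_k}^*$). Because only finitely many factors occur, I can fix an integer $c$ so that $s^{c}$ simultaneously clears all the denominators appearing in the finitely many $\gamma_k$, $\gamma_k^{-1}$ and in the coefficients of the $\alpha_k$.

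The heart of the matter is an absorption step: I would establish an integer $M$ such that, for every homomorphism $\mu$ over $R[X]$ with $\mu(0)=0$ whose coefficients lie in $(s^{M})R$, each conjugate $\gamma_k\,E(\mu_s)\,\gamma_k^{-1}$ lies in the image of the localisation map $\EO_{R[X]}\to\EO_{R_s[X]}$. To prove this I would expand the fixed $\gamma_k$ as a bounded word in the elementary generators and push the $s^{M}$-divisibility of $\mu$ through each conjugation using the orthogonal commutator identities of the type displayed in Lemma~\ref{N1}, together with the explicit matrix forms of $E_{\alpha_{ij}}$ and $E_{\beta_{ij}}^{*}$: every denominator $s$ contributed by a letter of $\gamma_k$ is cancelled by one factor of $s$ drawn from $\mu$, so that the resulting elementary generators all acquire integral parameters. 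The delicate point is the denominator bookkeeping, namely that $M$ can be chosen uniformly over the finitely many $\gamma_k$; this is the step I expect to be the main obstacle.

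Granting the absorption step the dilation is immediate. For $b=s^{N}c_0\in(s^{N})R$ the $k$-th factor of $\theta_s(bX)$ has parameter $bX\,\alpha_k(bX)$, whose coefficients, after clearing by $s^{c}$, are divisible by $s^{N-c}$; choosing $N\ge M+c$ makes every such parameter divisible by $s^{M}$, so by the absorption step each factor, and hence the whole product $\Phi(X)$, descends to $\EO_{R[X]}(Q\otimes R[X],H(R)^m\otimes R[X])$ with $\Phi(0)=\I$ and $\Phi_s(X)=\theta_s(bX)$. Finally I would identify $\Phi(X)$ with $\theta(bX)$ itself: since both are trivial at $X=0$ and agree over $R_s[X]$, I would rerun the construction over $R[X,Y]$ applied to $\theta(Y+bX)\theta(Y)^{-1}$ and then specialise $Y=0$, which yields $\theta(bX)=\Phi(X)$ on the nose in $\og_{R[X]}$ without having to invert the possibly zero-dividing element $s$. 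Hence $\theta(bX)\in\EO_{R[X]}(Q\otimes R[X],H(R)^m\otimes R[X])$ for all $b\in(s^{N})R$ with $N\gg 0$.
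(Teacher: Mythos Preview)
This lemma is not proved in the present paper; it is quoted from \cite{aarr}*{Lemma~5.7}. The paper does, however, prove the projective analogue (the Proposition titled ``Dilation Principle'' just after Corollary~\ref{gen2}), and your outline tracks that argument closely: split $\theta_s(X)$ via Lemma~\ref{split} into conjugates $\gamma_k E(X\alpha_k(X))\gamma_k^{-1}$, dilate $X\mapsto bX$, and absorb the $s$-denominators through the conjugations so that every elementary parameter becomes integral.

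Two points on the details. First, the absorption step you flag as the main obstacle is exactly what the paper packages as Lemma~\ref{gen3} and Corollary~\ref{gen2}: they give a product decomposition $\varepsilon\,E(s^{2^{k}\ell}xW)\,\varepsilon^{-1}=\prod_t E(s^{\ell}x_tY_t)$ with a controlled loss of $s$-powers at each letter of $\varepsilon$, hence the uniform $M$ you want over the finitely many $\gamma_k$. You should cite these rather than Lemma~\ref{N1}, whose commutator identities concern conjugation of DSER generators by $\EO_4(R)$ and are not the relations used here.

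Second, your final identification of $\Phi(X)$ with $\theta(bX)$ does not work as written: rerunning the construction on $\theta(Y+bX)\theta(Y)^{-1}$ over $R[X,Y]$ only produces another elementary lift whose localization at $s$ agrees with the target, so the zero-divisor ambiguity is reproduced, not removed. Note that the paper's own Dilation Principle in fact stops short of this identification---its conclusion is merely that some $\widehat\theta(X)\in\EO_{R[X]}$ localizes to $\theta(bX)$, which is all the Quillen patching in Theorem~\ref{LGP} needs. The stronger assertion $\theta(bX)\in\EO_{R[X]}$ of the cited lemma requires either that $s$ be a non-zero-divisor (so $R[X]\hookrightarrow R_s[X]$) or a direct check that the specific integral parameters you choose multiply out entrywise to the matrix $\theta(bX)$ over $R[X]$; your two-variable trick supplies neither.
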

\begin{theorem}[(Local-Global Principle)\cite{aarr}*{Theorem~5.3}]\label{lg}
 Let $\theta(X) \in \og_{R[X]}(M[X])$ with $\theta(0) = \I$. If for all maximal ideals $\mathfrak{m}$ of $R$, $\theta(X)_\mathfrak{m} \in \EO_{R_{\mathfrak{m}}[X]}(Q \otimes R_\mathfrak{m}[X], H(R)^{m}\otimes R_\mathfrak{m}[X] )$, then $\theta(X) \in \EO_{R[X]}(Q \otimes R[X], H(R)^{m} \otimes R[X])$.
\end{theorem}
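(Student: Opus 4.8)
The plan is a Quillen--Suslin style patching argument driven by the Dilation Lemma (Lemma~\ref{dilation}). The key object is the ``difference'' element
\[
\Theta(X,Y)=\theta(X+Y)\,\theta(Y)^{-1}\in\og_{R[X,Y]}(M[X,Y]),
\]
which satisfies $\Theta(0,Y)=\I$, so that a dilation in the variable $X$ may be applied to it over the base ring $R[Y]$. Since $\theta$ is not multiplicative in its polynomial argument, this difference element is exactly what lets one reassemble $\theta(X)$ from pieces that are individually controllable.

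I would first treat the case where $Q$ and $P$ are free over $R$. For each $\mathfrak{m}\in\max(R)$, the local hypothesis $\theta(X)_{\mathfrak{m}}\in\EO_{R_{\mathfrak{m}}[X]}(\cdots)$ involves only finitely many denominators, so there is $s=s(\mathfrak{m})\notin\mathfrak{m}$ with $\theta(X)_s\in\EO_{R_s[X]}(\cdots)$; consequently $\Theta(X,Y)_s$ is elementary as well. Applying Lemma~\ref{dilation} in the variable $X$ over the base ring $R[Y]$ yields an integer $N$ with
\[
\theta\!\left(s^{N}X+Y\right)\theta(Y)^{-1}\in\EO_{R[X,Y]}\!\left(Q\otimes R[X,Y],\,H(R)^m\otimes R[X,Y]\right).
\]
The elements $s(\mathfrak{m})$ lie in no maximal ideal, so finitely many, $s_1,\dots,s_k$, already generate the unit ideal; after raising to a common power I may write $1=\sum_i c_i s_i^{N}$. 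Setting $u_i=\sum_{j\le i}c_j s_j^{N}$ with $u_0=0$ and $u_k=1$, the telescoping identity
\[
\theta(X)=\theta(u_kX)=\prod_{i=1}^{k}\theta(u_iX)\,\theta(u_{i-1}X)^{-1}
\]
displays each factor as the image of $\theta(s_i^{N}X'+Y)\theta(Y)^{-1}$ under the $R$-algebra map $X'\mapsto c_iX$, $Y\mapsto u_{i-1}X$; functoriality of $\EO$ then makes every factor, and hence $\theta(X)$ itself, elementary over $R[X]$.

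The main obstacle is the passage from this free-module argument to the stated projective case, since Lemma~\ref{dilation} is available only for free $Q,P$, and the dilation above was applied over $R[Y]$, where freeness over $R$ is essential. To handle projective $Q,P$ I would localize at elements $g$ that trivialize both modules and run the free argument over each $R_g$, which is delicate because the Dilation Lemma then only delivers the dilated difference element over $R_g[X,Y]$ rather than over $R[X,Y]$; one must organize a compatible two-stage patching (first over the elementarity-localizations $s_i$, then over the freeness-localizations $g$) so that the telescoped factors descend to $\EO_{R[X]}$. The subsidiary checks---that each $s_i$ stays non-nilpotent after the extension $R\rightsquigarrow R[Y]$ and that localization commutes with this polynomial extension---are routine, so the crux is precisely the bookkeeping that glues the localized dilation data into a genuinely global statement over $R[X,Y]$.
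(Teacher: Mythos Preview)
The paper does not give its own proof of this statement: Theorem~\ref{lg} is simply \emph{quoted} from \cite{aarr}*{Theorem~5.3}, with no argument supplied here. So there is nothing in the present paper to compare your proof against. That said, your Quillen--Suslin patching via the difference element $\Theta(X,Y)=\theta(X+Y)\theta(Y)^{-1}$ and the Dilation Lemma over the base $R[Y]$, followed by a telescoping along a partition of unity $1=\sum c_i s_i^{N}$, is exactly the standard mechanism and is precisely what the paper invokes (by name only) for the projective analogue, Theorem~\ref{LGP}: ``Use the Dilation Principle \dots\ and D.~Quillen's arguments \dots\ to deduce the Local-Global Principle from the Dilation Principle.'' Your free-case argument is correct as written.

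Your ``main obstacle'' paragraph, however, is aimed at the wrong target. In the statement of Theorem~\ref{lg} the hyperbolic part is $H(R)^m$, so $P=R^m$ is free, and the Dilation Lemma~\ref{dilation} (also quoted from \cite{aarr}) carries the hypothesis that $Q$ and $P$ are free; Theorem~\ref{lg} is the \emph{free} Local-Global Principle, and your first half already proves it. The genuinely projective situation is the separate Theorem~\ref{LGP}, and the paper addresses it not by a two-stage patching of the kind you sketch but by first proving a strengthened Dilation Principle (Proposition preceding Theorem~\ref{LGP}) that only needs $Q_s$ and $P_s$ free and already produces an element $\widehat{\theta}(X)\in\EO_{R[X]}(Q[X],P[X])$ localizing to $\theta(bX)$. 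Once that projective Dilation Principle is in hand, the same one-stage Quillen telescoping you wrote down finishes Theorem~\ref{LGP} without further bookkeeping. So the ``delicate two-stage patching'' you anticipate is avoided by upgrading the dilation step, not by reorganizing the patching.
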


We state here some useful facts.

\begin{enumerate}\label{hom1}
\item Let $R$ be a ring and $S$ be a multiplicative subset of $R$. Let $M$ be a finitely presented $R$-module and $N$ be any $R$-module. Then we have a natural isomorphism,
\[\eta : S^{-1} (\hom_R(M,N)) \rightarrow   \hom_{S^{-1}R}(S^{-1}M,S^{-1}N).\]
\item \label{hom2}
Let $R$ be a ring and $M$ be a finitely presented $R$-module and let $N$ be any $R$ module. Then we have a natural isomorphism,
\[\gamma: \hom_R(M,N)[X] \rightarrow \hom_{R[X]}(M[X],N[X]).\]
\end{enumerate}

We state a variation of \cite{aarr}*{Lemma~5.6} which can be proved by the similar arguments of \cite{aarr}*{Lemma~5.6}. we also refer the reader to \cite{aa} for the various commutator relations involved in proving this lemma.

\begin{lemma}\label{gen3} 
Let $Q$ be a quadratic $R$-space and $P$ be a finitely generated projective $R$-module.
Let $s$ be a non-nilpotent element of $R$. Fix $r\in \mathbb{N}$. Given an integer $N\in \mathbb{N}$ and $x\in R$, then for integers $N_{t}\geq N$ and $x_{t} \in R$, there exists a product decomposition 
\[
  E\left(W_{ij}\right)E\left(s^NxY_{kl}\right)E\left(-W_{ij}\right)
   = \prod_{t=1}^\mu E\left(s^{N_t}x_tZ_{p_tq_t}\right),
\]where $W,Y,Z \in \hom_{R_{s}}(Q_{s},P_{s})$ or $W,Y,Z \in \hom_{R_{s}}(Q_{s},P_{s}^{*})$, $i,k,p_t \in \{1,2,...,m\}$ and $j,l,q_t \in \{1,2,...,n\}$ for every integer $1\leq t \leq \mu$.
\end{lemma}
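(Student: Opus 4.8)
The plan is to reduce the conjugation to a single commutator and then feed it through the explicit commutator relations among the DSER generators, keeping track of the powers of $s$ throughout. Writing the conjugation in the form $ghg^{-1} = [g,h]\,h$ with $g = E(W_{ij})$ and $h = E(s^N x Y_{kl})$, I would first record
\[
 E(W_{ij})\,E(s^N x Y_{kl})\,E(-W_{ij}) = \bigl[E(W_{ij}),\, E(s^N x Y_{kl})\bigr]\, E(s^N x Y_{kl}).
\]
The trailing factor $E(s^N x Y_{kl})$ already has the required shape (take $N_t = N$, $x_t = x$ and $Z = Y$), so the whole problem is to expand the commutator $[E(W_{ij}),\, E(s^N x Y_{kl})]$ as a product of elementary generators in which every scalar still carries a factor $s^N$ with the power at least $N$.

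Next I would split into cases according to the types of the two generators, namely whether each is of the form $E_\alpha$ with $\alpha\in\hom(Q,P)$ or of the form $E_\beta^*$ with $\beta\in\hom(Q,P^*)$, and, within each, according to the relative positions of the indices $(i,j)$ and $(k,l)$. Two generators of the same type either commute or combine additively, so their commutator is trivial or is again a single generator; the genuine work is in the mixed commutator $[E_\alpha, E_\beta^*]$, whose Chevalley-type expansion, as recorded in \cite{aa}, produces a product of generators whose coefficients are bilinear, and in a few positions quadratic, in the entries of $W$ and of $s^N x Y$.

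The key point I would then verify is that the factor $s^N x$ survives as a scalar in every resulting coefficient, the remaining $R_s$-valued data being absorbed into the direction $Z_{p_t q_t}$: a bilinear term takes the shape $s^N x$ times an element of $R_s$, so its scalar is $s^N x$ and one sets $N_t = N$, $x_t = x$; a term quadratic in the $s^N$-divisible argument carries $(s^N x)^2 = s^{2N} x^2$, so one sets $N_t = 2N \ge N$, $x_t = x^2$. In both cases $N_t \ge N$, the scalar $x_t$ lies in $R$, and the index $(p_t,q_t)$ produced by each relation again ranges over $\{1,\dots,m\}\times\{1,\dots,n\}$, since the commutator formulas only recombine the hyperbolic and quadratic indices. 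Collecting the factors yields the asserted decomposition $\prod_{t=1}^{\mu} E(s^{N_t} x_t Z_{p_t q_t})$.

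I expect the main obstacle to be the bookkeeping in the mixed case: one must check, relation by relation, that no cancellation strips a coefficient of its $s^N$-factor, and that the coefficients $\tfrac12 d_j$ appearing in the matrix forms of $E_{\alpha_{ij}}$ and $E_{\beta_{ij}}^*$ do not disturb this divisibility (this is where invertibility of $2$ enters). Since the underlying commutator identities are precisely those of \cite{aa} used in \cite{aarr}*{Lemma~5.6}, the argument would run formally as in \cite{aarr}*{Lemma~5.6}, the only change being that the free homomorphisms there are replaced by the localized homomorphisms in $\hom_{R_s}(Q_s,P_s)$ and $\hom_{R_s}(Q_s,P_s^*)$.
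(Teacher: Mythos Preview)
Your proposal is correct and follows essentially the same route as the paper: the paper's proof consists solely of the instruction to repeat the argument of \cite{aarr}*{Lemma~5.6} using the commutator relations among the generators of $\EO_{R_s}(Q_s,P_s)$, and that is precisely what you outline, with the additional (and helpful) detail of rewriting the conjugation as $[g,h]\,h$ and tracking the $s^N$-divisibility through the bilinear and quadratic terms of the commutator formulas. Your closing remark that the argument runs formally as in \cite{aarr}*{Lemma~5.6} with the free homomorphisms replaced by those over $R_s$ is exactly the content of the paper's one-line proof.
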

\begin{proof}
Use the similar arguments as in \cite{aarr}*{Lemma~5.6} which involves various commutator relations among the elementary generators of $\EO_{R_{s}}(Q_{s},P_{s})$.
\end{proof}
\begin{corollary}\label{gen2}
Let $Q$ be a quadratic $R$-space and $P$ be a finitely generated projective $R$-module. If $\varepsilon = \varepsilon_1\varepsilon_2\ldots\varepsilon_k$, where each $\varepsilon_j$ is an elementary generator of the type $E_{{\alpha_{ij}}}$ or $E_{\beta_{ij}}^{*}$ with $\alpha \in \hom_{R_{s}}(Q_{s},P_{s})$ or $\beta \in \hom_{R_{s}}(Q_{s},P_{s}^*)$.  For ${\mathnormal{l}}>0$, $1\leq r \leq m$, $1\leq p \leq n$ and $x\in R$, there is a product decomposition
\[\varepsilon E(s^{2^{\mathnormal{k}}\mathnormal{l}}xW_{rp})\varepsilon^{-1} = \prod_{t=1}^{\mu_{k}} E\left(s^{\mathnormal{l}}x_{t}Y_{r_tp_t}\right),\]where $W,Y \in \hom(Q_{s},P_{s})$ or $\hom(Q_{s},P_{s}^*)$, $1\leq r_{t} \leq m$, $1\leq p_{t}\leq n$ and $x_{t}\in R$ chosen suitably.

\end{corollary}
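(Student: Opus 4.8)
The plan is to prove Corollary~\ref{gen2} by induction on $k$, the number of elementary generators appearing in $\varepsilon = \varepsilon_1\varepsilon_2\cdots\varepsilon_k$, keeping the statement quantified over all $l>0$ so that the inductive hypothesis may be reapplied with a doubled target power. Lemma~\ref{gen3} is the engine: it controls a \emph{single} conjugation, expressing the conjugate of one elementary generator carrying a power of $s$ by another elementary generator as a finite product of elementary generators that still carry controlled powers of $s$. The exponent $2^{k}$ on the left is precisely calibrated so that, after the $k$ successive single conjugations encoded in $\varepsilon$, enough of the original power of $s$ survives to factor out a common $s^{l}$.

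For the base case $k=1$ I would take $\varepsilon=\varepsilon_1$ a single generator and apply Lemma~\ref{gen3} directly to $\varepsilon_1\,E(s^{2l}xW_{rp})\,\varepsilon_1^{-1}$, producing a finite product of generators each carrying a power of $s$ that is at least $s^{l}$; extracting the common factor $s^{l}$ and absorbing the residual $s^{(\,\cdot\,)-l}x$ into a new scalar in $R$ rewrites every factor in the required form $E(s^{l}x_tY_{r_tp_t})$. For the inductive step I would split off the outermost generator and write
\[
\varepsilon\,E(s^{2^{k}l}xW_{rp})\,\varepsilon^{-1}
= \varepsilon_1\Big[(\varepsilon_2\cdots\varepsilon_k)\,E(s^{2^{k}l}xW_{rp})\,(\varepsilon_2\cdots\varepsilon_k)^{-1}\Big]\varepsilon_1^{-1}.
\]
Applying the inductive hypothesis to the product $\varepsilon_2\cdots\varepsilon_k$ of $k-1$ generators, but with the target power $2l$ in place of $l$ (so that its input power $2^{k-1}\cdot 2l = 2^{k}l$ matches), rewrites the bracketed term as a finite product $\prod_t E(s^{2l}x_tZ_{r_tp_t})$. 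Conjugating each such factor by the single generator $\varepsilon_1$ and invoking the base case converts every $E(s^{2l}x_tZ_{r_tp_t})$ into a finite product of generators of the form $E(s^{l}x'_{t'}Y_{r_{t'}p_{t'}})$. Collecting these finitely many factors yields the asserted decomposition, with $\mu_{k}$ bounded by the product of the per-step multiplicities, hence finite.

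The main obstacle I anticipate is the power-of-$s$ bookkeeping that underlies the factor $2^{k}$, namely verifying that a single conjugation costs at most a factor of two in the exponent. This loss is governed by the quadratic term $-\tfrac{1}{2}\alpha\alpha^{*}$ in the DSER generators: in the commutator identities of \cite{aa} that drive Lemma~\ref{gen3}, this degree-two piece can square a denominator introduced by the conjugating generator and thus consume one halving of the surplus power. The two things that must be checked carefully are that the net exponent drops by at most a factor of two at each step (so that starting from $2^{k}l$ leaves at least $s^{l}$ after $k$ steps) and that each commutator expansion produces only finitely many generators, which together guarantee both the form $E(s^{l}x_tY_{r_tp_t})$ of the output and the finiteness of $\mu_{k}$.
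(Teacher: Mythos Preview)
Your proposal is correct and follows essentially the same approach as the paper, whose entire proof is the single sentence ``Apply Lemma~\ref{gen3} repeatedly.'' Your induction on $k$ is precisely the detailed execution of that iteration; note moreover that Lemma~\ref{gen3} as stated already guarantees $N_t\geq N$, so your factor-of-two bookkeeping is a safe (if slightly conservative) accounting that more than suffices to extract the common $s^{l}$ at the end.
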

\begin{proof}
Apply Lemma~\ref{gen3} repeatedly.
\end{proof}

\begin{proposition}[(Dilation Principle)]
Let $(Q,q)$ be a quadratic $R$-space of rank $n\ge 1$ and let $P$ be a finitely generated projective $R$-module of rank $m \ge 2$. Let $M= Q\perp H(P) $. Let $s$ be a non-nilpotent element in $R$ such that $P_s$ and $Q_s$ are free. Let $\theta(X) \in \og_{R[X]}(M[X])$ with $\theta(0) = \I$. Suppose that $\theta_s(X) \in \EO_{R_s[X]}(Q_s[X],R_s[X]^{m})$. Then there exists $\widehat{\theta}(X) \in \EO_{R[X]}(Q[X], P[X])$ and $\ell\gg0$ such that $\widehat{\theta}(X)$ localizes to $\theta(bX)$ for some $b\in (s^{\ell})$ and $\widehat{\theta}(0) = \I$.
\end{proposition}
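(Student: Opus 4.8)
The plan is to reduce the general projective case to the already-established free case via a standard patching-plus-dilation argument, and the key tool will be the Dilation Lemma (Lemma~\ref{dilation}) together with the structural results of Lemma~\ref{gen3} and Corollary~\ref{gen2}. First I would exploit the hypothesis that $P_s$ and $Q_s$ are free to transport the free-module Dilation Lemma to the present setting: since $\theta_s(X) \in \EO_{R_s[X]}(Q_s[X], R_s[X]^m)$ and $\theta_s(0) = \I$, I can write $\theta_s(X)$ as a product of elementary generators $E(X\alpha_{ij}(X))$ over $R_s[X]$ using the splitting result of Lemma~\ref{split} applied over the localization $R_s$. This expresses $\theta_s(X)$ explicitly in terms of the free elementary generators, which is what lets the denominators be controlled.

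\medskip

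Next, the heart of the argument is to clear denominators. Each elementary generator appearing in the decomposition of $\theta_s(X)$ involves homomorphisms in $\hom_{R_s}(Q_s, P_s)$ (or into $P_s^*$), whose entries have bounded $s$-denominators. Using the natural isomorphisms recorded in the facts following Theorem~\ref{lg} — namely $S^{-1}\hom_R(M,N) \cong \hom_{S^{-1}R}(S^{-1}M, S^{-1}N)$ for finitely presented $M$, and its polynomial analogue — I would lift each localized homomorphism to one defined over $R$ after multiplying by a suitable power of $s$. The substitution $X \mapsto bX$ with $b \in (s^N)$ for $N \gg 0$ then absorbs these denominators: replacing $X$ by $s^N X$ scales every occurrence of $X$ in $\theta_s(X)$ by $s^N$, and Corollary~\ref{gen2} is precisely the book-keeping device that guarantees that conjugation of a high-$s$-power generator $E(s^{2^k \ell} x W_{rp})$ by an arbitrary product of free elementary generators can be rewritten as a product of generators each still carrying the factor $s^\ell$. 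Iterating this I obtain a product $\widehat{\theta}(X)$ of genuine elementary generators over $R[X]$ whose localization at $s$ equals $\theta(bX)$, with $\widehat{\theta}(0) = \I$ inherited from $\theta(0) = \I$.

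\medskip

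The main obstacle will be the last matching step: showing that the lifted word $\widehat{\theta}(X) \in \EO_{R[X]}(Q[X], P[X])$ localizes back exactly to $\theta(bX)$, not merely to something agreeing with it up to a unit or a correction term supported away from $s$. This requires care because $\widehat{\theta}(X)$ is assembled from lifted homomorphisms whose values are only determined modulo the kernel of the localization map $\hom_R(Q,P) \to \hom_{R_s}(Q_s, P_s)$, and that kernel is the $s$-torsion. The device to control this is exactly the Dilation Lemma, Lemma~\ref{dilation}: once the decomposition of $\theta_s(bX)$ is available with all denominators cleared for $b \in (s^N)$, Lemma~\ref{dilation} asserts that $\theta(bX)$ itself already lies in $\EO_{R[X]}(Q \otimes R[X], H(R)^m \otimes R[X])$ for such $b$. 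Thus I would invoke Lemma~\ref{dilation} to produce the elementary word globally over $R[X]$, define $\widehat{\theta}(X)$ to be that word, and then verify that $\widehat{\theta}(X)_s = \theta(bX)$ by the injectivity of the localization comparison of the two elementary expressions, using that both restrict to the same element of $\og_{R_s[X]}(M_s[X])$.

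\medskip

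Finally, I would check the boundary condition $\widehat{\theta}(0) = \I$ directly: since $\theta(b \cdot 0) = \theta(0) = \I$ and each elementary generator in the decomposition carries a factor of $X$ (coming from the $X\alpha_{ij}(X)$ form in Lemma~\ref{split}), evaluating the lifted word at $X = 0$ collapses every generator to the identity, so $\widehat{\theta}(0) = \I$ as required. This completes the construction of the desired $\widehat{\theta}(X) \in \EO_{R[X]}(Q[X], P[X])$ and the exponent $\ell \gg 0$ with $b \in (s^\ell)$.
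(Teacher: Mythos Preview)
Your proposal is correct and follows essentially the same route as the paper: decompose $\theta_s(X)$ via Lemma~\ref{split} into conjugated generators $\gamma E(Xh(X)W_{ij})\gamma^{-1}$, use Corollary~\ref{gen2} to push the conjugating $\gamma$'s through while preserving an $s$-power in each generator, clear the remaining denominators by the $\hom$-localization isomorphisms, and finish with Lemma~\ref{dilation}. The only cosmetic difference is that the paper introduces an auxiliary variable $Z$ (substituting $Z \mapsto s^{N_t}Z$ and then $Z=1$) to organize the denominator-clearing step uniformly, whereas you phrase it directly as the substitution $X \mapsto bX$; the content is the same.
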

\begin{proof}
     Since $\theta(0) =\I$, we can write $\theta_s(X) = \textstyle{\prod_{t=1} ^{\nu}\gamma_{t} E\left(X{h}_{t}(X)W_{i_{t}j_{t}}\right) {\gamma_{t}}^{-1}}$, where ${{h}_{t}}(X) \in R_s[X]$ and $\gamma_{t} \in \EO_{R_s}(Q_s, P_{s})$. For ${\mathnormal{l}}>0$,  we can write $\theta_{s}(s^{\mathnormal{l}}X)$ as 
     $$\theta_s(s^{\mathnormal{l}}X) = \textstyle{\prod_{k=1}^{\nu} \gamma_{k} E\left(s^{\mathnormal{l}}X{h_{k}}(s^{\mathnormal{l}}X)W_{i_{k}j_{k}}\right) {\gamma_{k}}^{-1}}.$$ 
 Let $\gamma_{k} = \varepsilon_1 \varepsilon_2 \dots \varepsilon_{r_{k}}$ and $\mathnormal{l}=2^{{r_{k}}-1}$. Using Corollary~\ref{gen2}, we get 
 $$\theta_s(s^{2\mathnormal{l}}X) = \prod_{k=1}^{\nu} \prod_{t=1}^{\nu_{r_{k}}}E\left(s^{\mathnormal{l}}g_{t}(X)W_{i_{t}j_{t}}\right), \mbox{ where }W_{i_{t}j_{t}} \in \hom(Q_s,P_s).$$  
 
Since $Q_s$ and $P_s$ are free $R_s$-modules, we have
\[ P_s[X,Z]  \cong R_s^{2m}[X,Z]\cong P_s[X,Z]^{*} \mbox{ and }Q_s[X,Z] \cong R_s^{n}[X,Z].\] Using this isomorphism, the polynomials in $P_s[X,Z] $ can be regarded as the bilinear form as follows: For  $x=(z_1,z_2,\ldots, z_n, x_1,x_2,\ldots, x_{2m})$, $y=(w_1,w_2,\ldots, w_n, y_1,y_2,\ldots, y_{2m}) \in R_s^{n+2m}[X,Z]$; 
\[\langle x,y \rangle = (z_1,z_2,\ldots, z_n)\varphi(w_1,w_2,\ldots, w_n)^T\perp(x_1,x_2,\ldots, x_{2m}){\widetilde{\psi_m}}(y_1,y_2,\ldots, y_{2m})^T,\]where $\varphi$ is a diagonal invertible matrix and ${\widetilde{\psi_m}}$ denotes the standard hyperbolic form $${\sum_{i=1}^{2m}e_{2i-1,i}+\sum_{i=1}^{2m}e_{2i,2i-1}.}$$

Let $e_i^*$ be the standard basis for $R_s^{2m}$. Consider the element $Zg_t(X)e_i^*\in R_s^{2m}[X,Z]$ as an element in $(P_s)[X,Z]$.  By Corollary~\ref{gen2}, we can choose $N_t > 0$ such that $N_{t}$ is the maximum power of $s$ occurring in the denominator of $Zg_t(X)e_i^*$. Now using fact~\ref{hom2}, we can consider the element $Zg_t(X)e_i^*$ as a polynomial in $Z$. Applying the homomorphism $Z\mapsto s^{N_t}Z$, it follows that $\theta(bXZ^{2\mathnormal{l}})$ is defined over $R[X]$. That is, we have 
\begin{equation*} \theta(bXZ^{2\mathnormal{l}}) =
\textstyle{\prod_{k=1}^{\nu} \prod_{t=1}^{\nu_{r_{k}}}
E\left(s^{N_t}Zg_t(X)W_{i_tj_t}\right)} \in \EO_{R[X]}\left(Q[X], P[X]\right) \mbox{ for all } b\in(s^{\ell}) R, (\ell\gg0).
\end{equation*} 

Now substitute $Z=1$, then by Lemma~\ref{dilation}, there exists $\ell \gg0$ such that $\widehat{\theta}(X)\in \EO_{R[X]}(Q[X], P[X]))$ localizes to $\theta(bX)$ for some $b\in (s^{\ell})$ and $\widehat{\theta}(0) = \I$.

\end{proof}

\begin{theorem}[(Local-Global Principle)]\label{LGP}
Let $R$ be a commutative ring with identity in which $2$ is invertible. Let $(Q,q)$ be a quadratic $R$-space of rank $n\ge 1$ and let $P$ be a projective $R$-module of rank $m\geq 2$. Let $M= Q \perp H(P)$. Assume that for every maximal ideal $\mathfrak{m}$ of $R$, the module $M_{\mathfrak{m}}$ is isomorphic to $R_{\mathfrak{m}}^{n+2m}$ with the bilinear form $\varphi \perp {\overset{\sim}{\psi}}_{m}$. 

Let $\theta(X) \in \og_{R[X]}(M[X])$ with $\theta(0) = \I$ and $\theta_{\mathfrak{m}}(X) \in \EO_{R_{\mathfrak{m}}[X]}(Q_{\mathfrak{m}}[X],R_{\mathfrak{m}}[X]^{m})$ for all ${\mathfrak{m}}\in \max(R)$. Then $\theta(X) \in \EO_R(Q,H(P))$.
\end{theorem}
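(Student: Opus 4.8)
The strategy is to reduce the projective case to the free-module Local--Global Principle (Theorem~\ref{lg}) by a standard patching argument, using the Dilation Principle just proved as the bridge between local and global data. First I would fix $\theta(X) \in \og_{R[X]}(M[X])$ with $\theta(0) = \I$ satisfying the hypothesis that $\theta_{\mathfrak{m}}(X)$ is elementary for every maximal ideal $\mathfrak{m}$. Since $Q$ and $P$ are finitely generated projective, for each $\mathfrak{m}$ I can find a single $s \notin \mathfrak{m}$ such that both $Q_s$ and $P_s$ become free $R_s$-modules; after inverting such an $s$ the bilinear form on $M_s$ has the standard shape $\varphi \perp \widetilde{\psi}_m$ by the hypothesis on $M_{\mathfrak{m}}$. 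The point of localizing at such an $s$ is that over $R_s$ everything is free, so the free-module theory of Section~2 (and in particular the Dilation Principle) applies verbatim.

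\textbf{The patching step.} For each maximal ideal $\mathfrak{m}$, the hypothesis $\theta_{\mathfrak{m}}(X) \in \EO_{R_{\mathfrak{m}}[X]}(Q_{\mathfrak{m}}[X], R_{\mathfrak{m}}[X]^m)$ together with the fact that being elementary is detected over a suitable localization lets me choose $s = s(\mathfrak{m}) \notin \mathfrak{m}$ so that $Q_s, P_s$ are free and $\theta_s(X) \in \EO_{R_s[X]}(Q_s[X], R_s[X]^m)$. I would then invoke the Dilation Principle: there exists $\ell \gg 0$ and $\widehat{\theta}_s(X) \in \EO_{R[X]}(Q[X], P[X])$ with $\widehat{\theta}_s(0) = \I$ that localizes to $\theta(bX)$ for some $b \in (s^\ell)R$. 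Running over all maximal ideals, the elements $s(\mathfrak{m})^\ell$ generate the unit ideal (they cannot all lie in a common maximal ideal), so I can extract a finite partition of unity $\sum_{i} c_i t_i = 1$ with $t_i = b_i$ of the above form, associated to finitely many $s_1, \dots, s_k$.

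\textbf{Assembling the global conclusion.} With a finite partition of unity in hand, I would use the standard telescoping decomposition: writing $\theta(X) = \prod_i \theta\big((t_1 + \cdots + t_i)X\big)\,\theta\big((t_1 + \cdots + t_{i-1})X\big)^{-1}$, each factor is of the form $\theta(c X + dX)\theta(cX)^{-1}$, and by the dilation output each such factor, after the change of variable absorbing the scaling by a power of the relevant $s_i$, lies in $\EO_{R[X]}(Q[X], P[X])$. Summing the powers so that the residual scaling variable can be set equal to $1$ yields $\theta(X) \in \EO_{R[X]}(Q[X], P[X])$, and evaluating at $X=1$ gives $\theta \in \EO_R(Q, H(P))$ as desired; the final identification of $\EO_{R[X]}(Q[X],P[X])$ with the DSER group uses fact~\ref{hom2} to move freely between $\hom_R(Q,P)[X]$ and $\hom_{R[X]}(Q[X],P[X])$.

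\textbf{Main obstacle.} The delicate point is not the abstract patching but ensuring that the Dilation Principle applies with a \emph{uniform} structure: I must guarantee that the same $s$ simultaneously trivializes $Q_s$ and $P_s$ and puts the form into the standard shape $\varphi \perp \widetilde{\psi}_m$, so that the free-module Dilation Principle is literally applicable, and I must track the powers of $s$ carefully enough through the change of variable $X \mapsto bXZ^{2\ell}$ (followed by $Z = 1$) so that the telescoping factors genuinely glue over $R[X]$ rather than merely over each localization. Managing this bookkeeping of the exponents $\ell$ across the finitely many $s_i$, while keeping each dilated factor in $\EO_{R[X]}(Q[X],P[X])$, is where the real work lies.
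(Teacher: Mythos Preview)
Your approach is essentially the one the paper takes: the paper's entire proof is the single sentence ``Use the Dilation Principle and D.~Quillen's arguments to deduce the Local--Global Principle from the Dilation Principle,'' and you have spelled out exactly that---pick $s \notin \mathfrak{m}$ trivializing $Q$ and $P$, apply the projective Dilation Principle just established, and then run Quillen's partition-of-unity/telescoping patching. One small point: the Dilation Principle as stated yields $\theta(bX)$ elementary, whereas your telescoping factors are of the form $\theta((c+d)X)\,\theta(cX)^{-1}$; the standard fix is to apply dilation to the two-variable family $\psi(X,T)=\theta(X+T)\,\theta(X)^{-1}$ in the variable $T$ (since $\psi(X,0)=\I$), which is exactly what ``Quillen's arguments'' encodes and what your sketch implicitly assumes.
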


\begin{proof}
Use the Dilation Principle (Lemma~\ref{dilation}) and D. Quillen's arguments in \cite{MR0427303} to deduce the Local-Global Principle from the Dilation Principle.
\end{proof}
\section{Normality of $\EO_{R}(Q, H(P))$ in $\og_{R}(Q \perp H(P))$}
In this section, we prove the main result of this paper. Let $P$ be  a finitely generated projective module.
\begin{theorem}\label{main2}
  $\EO_{R}(Q, H(P))$ is a normal subgroup of  $\og_{R}(Q \perp H(P))$, where $Q$ and $H(P)$ are quadratic spaces over a commutative ring $R$, where $\rank(Q) \ge 1$ and $\rank(P) \ge 2$.
\end{theorem}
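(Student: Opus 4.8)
The plan is to reduce the projective case to the free case already settled in Theorem~\ref{main1}, using the Local--Global Principle of Theorem~\ref{LGP} as the patching device. Since normality amounts to the assertion that $\og_R(Q\perp H(P))$ conjugates every elementary generator back into $\EO_R(Q,H(P))$, it will suffice to fix $\alpha\in\og_R(Q\perp H(P))$ together with an elementary generator $E(\beta)$, where $\beta\in\hom(Q,P)$ or $\beta\in\hom(Q,P^*)$, and to prove $\alpha E(\beta)\alpha^{-1}\in\EO_R(Q,H(P))$.

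First I would introduce the homotopy variable. Setting $M=Q\perp H(P)$, I define $\theta(T)=\alpha E(T\beta)\alpha^{-1}$, viewed as an element of $\og_{R[T]}(M[T])$. Since $T\beta$ is again a homomorphism over $R[T]$, the factor $E(T\beta)$ is a genuine elementary element there, and $\theta(0)=\I$ because $E(0)=\I$. Specializing $T=1$ returns the element $\alpha E(\beta)\alpha^{-1}$ that must be shown to lie in the elementary subgroup, so it is enough to establish $\theta(T)\in\EO_{R[T]}(Q[T],H(P[T]))$.

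Next I would check the local hypothesis of Theorem~\ref{LGP}. Upon localizing at a maximal ideal $\mathfrak m$, the finitely generated projective module $P_{\mathfrak m}$ becomes free of rank $m=\rank P\ge 2$, and $Q_{\mathfrak m}$ becomes a free quadratic space which, since $2$ is invertible, is diagonalizable; hence $M_{\mathfrak m}$ carries the prescribed form $\varphi\perp\widetilde{\psi}_{m}$ required by Theorem~\ref{LGP}. The localization is $\theta(T)_{\mathfrak m}=\alpha_{\mathfrak m}\,E(T\beta)_{\mathfrak m}\,\alpha_{\mathfrak m}^{-1}$. Regarding the constant transformation $\alpha_{\mathfrak m}\in\og_{R_{\mathfrak m}}$ as an element of $\og_{R_{\mathfrak m}[T]}$ by base change, this is exactly the conjugate of an elementary generator by an orthogonal element over the commutative ring $R_{\mathfrak m}[T]$, in which $2$ remains invertible. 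Because $m\ge 2$, Theorem~\ref{main1} applies verbatim over $R_{\mathfrak m}[T]$ and yields $\theta(T)_{\mathfrak m}\in\EO_{R_{\mathfrak m}[T]}(Q_{\mathfrak m}[T],R_{\mathfrak m}[T]^{m})$ for every $\mathfrak m$. The Local--Global Principle then promotes these local conclusions to $\theta(T)\in\EO_{R[T]}(Q[T],H(P[T]))$, and setting $T=1$ finishes the proof.

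The main conceptual obstacle has in fact already been surmounted, namely the construction of the Dilation and Local--Global machinery of Section~3 in the non-free setting; granting that, what remains is essentially formal bookkeeping. The delicate points I would be careful about are: that $\og_{R_{\mathfrak m}}$ really does act over the polynomial extension $R_{\mathfrak m}[T]$, so that the free normality statement of Theorem~\ref{main1} is legitimately applicable there; and that the standing hypotheses of Theorem~\ref{LGP}, namely the prescribed local shape of $M_{\mathfrak m}$ and the normalization $\theta(0)=\I$, are all in force before patching. No new estimates enter: the whole content is that local freeness of $P$ transforms the projective assertion into the free one, while the Local--Global Principle lifts the local verifications back up to $R$.
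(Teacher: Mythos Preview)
Your proposal is correct and follows essentially the same route as the paper: introduce $\theta(T)=\alpha E(T\beta)\alpha^{-1}$ with $\theta(0)=\I$, verify at each maximal ideal $\mathfrak m$ that $\theta(T)_{\mathfrak m}$ lies in the elementary group (by the free case, Theorem~\ref{main1}, applied over $R_{\mathfrak m}[T]$), and then invoke the Local--Global Principle (Theorem~\ref{LGP}) to conclude. The only difference is that you spell out the appeal to Theorem~\ref{main1} and the verification of the hypotheses of Theorem~\ref{LGP} more explicitly than the paper does.
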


\begin{proof}
Let $\eta \in \og_R(Q \perp H(P))$ and $E_{\beta}, E_{\gamma}^*$, elementary generators in $\EO_R(Q, H(P))$. For any maximal ideal $\mathfrak{m}$ of $R$, the $R_{{\mathfrak{m}}}$-modules $Q_{\mathfrak{m}}$and $H(P)_{\mathfrak{m}}$ are free and $H(P)_{\mathfrak{m}} \cong h^{m}$.

\vspace{1.5mm}
Let $\theta(T) = \eta E_{T\beta}\eta^{-1} \in \EO_{R[T]}(Q\otimes R[T], H(P) \otimes R[T])$. Then $\theta(0) = \I$ and 
$$(\theta(T))_{\mathfrak{m}} = \eta_{\mathfrak{m}} {(E_{T\beta})}_{\mathfrak{m}}{(\eta^{-1})}_{\mathfrak{m}} \in \EO_{{R_{\mathfrak{m}}[T]}}(Q\otimes {R_{\mathfrak{m}}[T]}, H(P) \otimes {{R_{\mathfrak{m}}[T]}})$$
 for all ${\mathfrak{m}} \in \max(R)$. By Theorem~\ref{LGP}, $\theta(T) \in \EO_{R[T]}(Q\otimes R[T], H(P) \otimes R[T])$ . Therefore, we have $\theta(1) = \eta E_{\beta}\eta^{-1} \in \EO_R(Q,H(P))$. \end{proof}

\begin{acknowledgements}\label{ackref}
The first author thanks the School of Mathematics, Tata Institute of Fundamental Research, Mumbai for its hospitality during this work. 
\end{acknowledgements}

\begin{bibdiv}
\begin{biblist}
\bib{MR0258837}{article}{
      author={Abe, Eiichi},
       title={Chevalley groups over local rings},
        date={1969},
        ISSN={0040-8735},
     journal={T\^ohoku Math. J. (2)},
      volume={21},
       pages={474\ndash 494},
         url={http://dx.doi.org/10.2748/tmj/1178242958},
}

\bib{aa}{article}{
      author={Ambily, A.A.},
       title={{Yoga of commutators in Roy's elementary orthogonal group}},
        date={2013},
     journal={arXiv:1305.2826 [math.AC]},
}

\bib{aa1}{article}{
      author={Ambily, A.A.},
       title={{Normality and $\K_1$-stability of Roy's elementary orthogonal
  group}},
        date={2015},
     journal={J. Algebra},
      volume={424},
       pages={522\ndash 539},
}

\bib{aarr}{article}{
      author={Ambily, A.A.},
      author={Rao, Ravi~A.},
       title={{Extendability of quadratic modules over a polynomial extension
  of an equicharacteristic regular local ring}},
        date={2014},
     journal={J. Pure Appl. Algebra},
      volume={218},
      number={10},
       pages={109\ndash 121},
}

\bib{MR1329456}{article}{
      author={Bak, Anthony},
      author={Vavilov, Nikolai},
       title={Normality for elementary subgroup functors},
        date={1995},
        ISSN={0305-0041},
     journal={Math. Proc. Cambridge Philos. Soc.},
      volume={118},
      number={1},
       pages={35\ndash 47},
         url={http://dx.doi.org/10.1017/S0305004100073436},
}

\bib{MR0174604}{article}{
      author={Bass, H.},
       title={{$\K$}-theory and stable algebra},
        date={1964},
        ISSN={0073-8301},
     journal={Inst. Hautes \'Etudes Sci. Publ. Math.},
      number={22},
       pages={5\ndash 60},
         url={http://www.numdam.org/item?id=PMIHES_1964__22__5_0},
}

\bib{RB1}{article}{
      author={Basu, Rabeya},
       title={Local-global principle for general quadratic and general
  hermitian groups and the nilpotence of {$\KH_1$}},
        date={2016},
     journal={Problems in the theory of representations of algebras and groups.
  Part~30,Zap. Nauchn. Sem. POMI},
      volume={452},
       pages={5\ndash 31},
         url={http://mi.mathnet.ru/znsl6354},
}

\bib{MR3525594}{incollection}{
      author={Calm\`es, Baptiste},
      author={Fasel, Jean},
       title={Groupes classiques},
        date={2015},
   booktitle={Autours des sch\'emas en groupes. {V}ol. {II}},
      series={Panor. Synth\`eses},
      volume={46},
   publisher={Soc. Math. France, Paris},
       pages={1\ndash 133},
}

\bib{MR0207856}{article}{
      author={Cohn, P.~M.},
       title={On the structure of the {${\rm GL}_{2}$} of a ring},
        date={1966},
        ISSN={0073-8301},
     journal={Inst. Hautes \'Etudes Sci. Publ. Math.},
      number={30},
       pages={5\ndash 53},
         url={http://www.numdam.org/item?id=PMIHES_1966__30__5_0},
}

\bib{MR1958377}{article}{
      author={Hazrat, Roozbeh},
      author={Vavilov, Nikolai},
       title={{$\K_1$} of {C}hevalley groups are nilpotent},
        date={2003},
        ISSN={0022-4049},
     journal={J. Pure Appl. Algebra},
      volume={179},
      number={1-2},
       pages={99\ndash 116},
         url={http://dx.doi.org/10.1016/S0022-4049(02)00292-X},
}

\bib{MR497932}{article}{
      author={Kope{\u\i}ko, V.~I.},
       title={{Stabilization of symplectic groups over a ring of polynomials}},
        date={1978},
        ISSN={0368-8666},
     journal={Mat. Sb. (N.S.)},
      volume={106(148)},
      number={1},
       pages={94\ndash 107},
}

\bib{MR717580}{article}{
      author={Kope{\u\i}ko, V.~I.},
       title={A theorem of {S}uslin},
        date={1983},
        ISSN={0373-2703},
     journal={Zap. Nauchn. Sem. Leningrad. Otdel. Mat. Inst. Steklov. (LOMI)},
      volume={132},
       pages={119\ndash 121},
}

\bib{MR610478}{article}{
      author={Parimala, Raman},
       title={Quadratic forms over polynomial rings over {D}edekind domains},
        date={1981},
        ISSN={0002-9327},
     journal={Amer. J. Math.},
      volume={103},
      number={2},
       pages={289\ndash 296},
         url={http://dx.doi.org/10.2307/2374218},
}

\bib{MR2033642}{article}{
      author={Petrov, V.~A.},
       title={Odd unitary groups},
        date={2003},
        ISSN={0373-2703},
     journal={Zap. Nauchn. Sem. St.Petersburg. Otdel. Mat. Inst. Steklov.
  (POMI)},
      volume={305},
      number={Vopr. Teor. Predst. Algebr. i Grupp. 10},
       pages={195\ndash 225},
         url={http://dx.doi.org/10.1007/s10958-005-0372-z},
}

\bib{MR2473747}{article}{
      author={Petrov, V.~A.},
      author={Stavrova, A.~K.},
       title={Elementary subgroups in isotropic reductive groups},
        date={2008},
        ISSN={0234-0852},
     journal={Algebra i Analiz},
      volume={20},
      number={4},
       pages={160\ndash 188},
         url={http://dx.doi.org/10.1090/S1061-0022-09-01064-4},
}

\bib{MR0427303}{article}{
      author={Quillen, Daniel},
       title={{Projective modules over polynomial rings}},
        date={1976},
        ISSN={0020-9910},
     journal={Invent. Math.},
      volume={36},
       pages={167\ndash 171},
}

\bib{MR727375}{article}{
      author={Rao, Ravi~A.},
       title={{Extendability of quadratic modules with sufficient {W}itt
  index}},
        date={1984},
        ISSN={0021-8693},
     journal={J. Algebra},
      volume={86},
      number={1},
       pages={159\ndash 180},
         url={http://dx.doi.org/10.1016/0021-8693(84)90062-0},
}

\bib{MR0231844}{article}{
      author={Roy, Amit},
       title={{Cancellation of quadratic form over commutative rings}},
        date={1968},
        ISSN={0021-8693},
     journal={J. Algebra},
      volume={10},
       pages={286\ndash 298},
}

\bib{MR0068874}{article}{
      author={Serre, Jean-Pierre},
       title={Faisceaux alg\'ebriques coh\'erents},
        date={1955},
        ISSN={0003-486X},
     journal={Ann. of Math. (2)},
      volume={61},
       pages={197\ndash 278},
         url={http://dx.doi.org/10.2307/1969915},
}

\bib{MR0447424}{article}{
      author={Suslin, A.~A.},
       title={On a theorem of {C}ohn},
        date={1976},
     journal={Zap. Nau\v cn. Sem. Leningrad. Otdel. Mat. Inst. Steklov.
  (LOMI)},
      volume={64},
       pages={127\ndash 130},
}

\bib{MR0469914}{article}{
      author={Suslin, A.A.},
      author={Kope{\u\i}ko, V.I.},
       title={Quadratic modules and the orthogonal group over polynomial
  rings},
        date={1977},
     journal={Zap. Nau\v cn. Sem. Leningrad. Otdel. Mat. Inst. Steklov.
  (LOMI)},
      volume={71},
       pages={216\ndash 250},
}

\bib{MR1339654}{article}{
      author={Suzuki, Kazuo},
       title={Normality of the elementary subgroups of twisted {C}hevalley
  groups over commutative rings},
        date={1995},
        ISSN={0021-8693},
     journal={J. Algebra},
      volume={175},
      number={2},
       pages={526\ndash 536},
         url={http://dx.doi.org/10.1006/jabr.1995.1199},
}

\bib{MR862660}{incollection}{
      author={Taddei, Giovanni},
       title={Normalit\'e des groupes \'el\'ementaires dans les groupes de
  {C}hevalley sur un anneau},
        date={1986},
   booktitle={Applications of algebraic {$\K$}-theory to algebraic geometry and
  number theory, {P}art {I}, {II} ({B}oulder, {C}olo., 1983)},
      series={Contemp. Math.},
      volume={55},
   publisher={Amer. Math. Soc., Providence, RI},
       pages={693\ndash 710},
         url={http://dx.doi.org/10.1090/conm/055.2/1862660},
}

\bib{MR1609905}{article}{
      author={Tang, Guoping},
       title={{Hermitian groups and {$\K$}-theory}},
        date={1998},
        ISSN={0920-3036},
     journal={$\K$-Theory},
      volume={13},
      number={3},
       pages={209\ndash 267},
         url={http://dx.doi.org/10.1023/A:1007725531627},
}

\bib{MR3003312}{article}{
      author={Yu, Weibo},
       title={Stability for odd unitary {$\K_1$} under the {$\Lambda$}-stable
  range condition},
        date={2013},
        ISSN={0022-4049},
     journal={J. Pure Appl. Algebra},
      volume={217},
      number={5},
       pages={886\ndash 891},
         url={http://dx.doi.org/10.1016/j.jpaa.2012.09.003},
}

\bib{MR3492199}{article}{
      author={Yu, Weibo},
      author={Tang, Guoping},
       title={Nilpotency of odd unitary {$\K_1$}-functor},
        date={2016},
        ISSN={0092-7872},
     journal={Comm. Algebra},
      volume={44},
      number={8},
       pages={3422\ndash 3453},
         url={http://dx.doi.org/10.1080/00927872.2015.1085543},
}
\end{biblist}
\end{bibdiv}

\end{document}